\newcommand{\newsection}[1]{\setcounter{equation}{0} \section{#1}}
\numberwithin{equation}{section}
\newtheorem{propn}{Proposition}[section]
\newtheorem{thm}[propn]{Theorem}
\newtheorem{lemma}[propn]{Lemma}
\newtheorem*{thm*}{Theorem}
\theoremstyle{definition}
\newcommand{\Hil}{\mathcal{H}}
\newcommand{\Z}{\mathbb{Z}_+}
\newcommand{\T}{\mathcal{T}}
\newcommand{\h}{\hat{T}}
\newcommand{\Comp}{\mathbb{C}}
 \newcommand{\D}{\mathbb{D}}
 \newcommand{\Dt}{D_{T}}
 \newcommand{\Dtp}{D_{\h_{p}}}
 \newcommand{\Dtq}{D_{\h_{q}}}
\newcommand{\clb}{\mathcal{B}}
\newcommand{\cld}{\mathcal{D}}
\newcommand{\cle}{\mathcal{E}}
\newcommand{\clf}{\mathcal{F}}
\newcommand{\clh}{\mathcal{H}}
\newcommand{\clk}{\mathcal{K}}
\newcommand{\cll}{\mathcal{L}}
\newcommand{\clp}{\mathcal{P}}
\newcommand{\clq}{\mathcal{Q}}
\newcommand{\cls}{\mathcal{S}}
\newcommand{\z}{\bm{z}}
\newcommand{\w}{\bm{w}}
\newcommand{\raro}{\rightarrow}
\newcommand{\NI}{\noindent}
\begin{document}

\title[Isometric Dilations and von Neumann inequality]{Isometric Dilations and von Neumann inequality for a class of tuples in the polydisc}

\author[Barik]{Sibaprasad Barik}
\address{Department of Mathematics, Indian Institute of Technology Bombay, Powai, Mumbai, 400076, India}
\email{sibaprasadbarik00@gmail.com}

\author[Das] {B. Krishna Das}
\address{Department of Mathematics, Indian Institute of Technology Bombay, Powai, Mumbai, 400076, India}
\email{dasb@math.iitb.ac.in, bata436@gmail.com}

\author[Haria]{Kalpesh J. Haria}
\address{School of Basic Sciences, Indian Institute of Technology Mandi, Mandi, 175005, Himachal Pradesh,  India}
\email{kalpesh@iitmandi.ac.in, hikalpesh.haria@gmail.com}

\author[Sarkar]{Jaydeb Sarkar}
\address{Indian Statistical Institute, Statistics and Mathematics Unit, 8th Mile, Mysore Road, Bangalore, 560059, India}
\email{jay@isibang.ac.in, jaydeb@gmail.com}

\subjclass[2010]{47A13, 47A20, 47A45, 47A56, 46E22, 47B32, 32A35,
32A70} \keywords{Hardy space over the polydisc, commuting
contractions, commuting isometries, isometric dilations, bounded
analytic functions, von Neumann inequality, distinguished variety}


\begin{abstract}
The celebrated Sz.-Nagy and Foias and Ando theorems state that a
single contraction, or a pair of commuting contractions, acting on a
Hilbert space always possesses isometric dilation and subsequently
satisfies the von Neumann inequality for polynomials in
$\mathbb{C}[z]$ or $\mathbb{C}[z_1, z_2]$, respectively. However, in
general, neither the existence of isometric dilation nor the von
Neumann inequality holds for $n$-tuples, $n \geq 3$, of commuting
contractions. The goal of this paper is to provide a taste of
isometric dilations, von Neumann inequality and a refined
version of von Neumann inequality for a large class of $n$-tuples,
$n \geq 3$, of commuting contractions.
\end{abstract}

\maketitle

\newsection{Introduction}

In this paper we investigate isometric dilation, von Neumann
inequality and a refined version of von Neumann inequality, in terms
of algebraic variety in the polydisc $\D^n$, for a
large class of $n$-tuples, $n \geq 3$, of commuting contractions on
Hilbert spaces. The set of all ordered $n$-tuples of commuting
contractions on a Hilbert space $\clh$ will be denoted as
$\T^n(\clh)$, that is
\[
\T^n(\clh) = \{(T_1, \ldots, T_n): T_i \in \clb(\clh),  \|T_i\| \leq
1, T_i T_j = T_j T_i, 1 \leq i, j \leq n\},
\]
where $\clb(\clh)$ denotes the set of all bounded linear operators
on $\clh$. Here we are mostly interested in $n$-tuples, $n \geq 3$,
of commuting contractions as it is well-known that a contraction, or
a pair of commuting contractions, admits isometric dilation and
hence, satisfies the von Neumann inequality (see Sz.-Nagy and Foias
\cite{NF} and Ando \cite{An}). A refined version of von Neumann
inequality, in the sense of algebraic varieties, also follows from
the recent papers \cite{AM1}, \cite{DS} and \cite{DSS}. More
specifically, here we are concerned with the validity of the
following three statements for tuples in $\T^n(\clh)$.

\vspace{0.1in}

\NI \textsf{Statement 1 (On isometric dilations):} \textit{Let $T
\in \T^n(\clh)$. Then there exist a Hilbert space $\clk (\supseteq
\clh)$ and an $n$-tuple of commuting isometries $V \in \T^n(\clk)$
such that $T$ dilates to $V$}.

Now any $n$-tuple of commuting isometries $V \in \T^n(\clk)$ can be
extended to an $n$-tuple of commuting unitaries, that is, there
exist a Hilbert space $\cll$ containing $\clk$ and an $n$-tuple of
commuting unitary operators $U \in \T^n(\cll)$ which extends $V$
\cite{NF}. Therefore, the celebrated von Neumann inequality is an
immediate consequence of Statement 1 (cf. \cite{NF}):

\vspace{0.1in}

\NI\textsf{Statement 2 (On von Neumann inequality):} If $T \in
\T^n(\clh)$, then for all $p \in \mathbb{C}[z_1, \ldots, z_n]$, the
following holds:
\[
\|p(T)\|_{\clb(\clh)} \leq \sup_{\z \in \D^n} |p(\z)|.
\]
\NI Here $\z$ denotes the element $(z_1, \ldots, z_n)$ in
$\mathbb{C}^n$, $z_i \in \mathbb{C}$, and $\D^n = \{\z \in
\mathbb{C}^n: |z_i| < 1, i = 1, \ldots, n\}$.

The next natural geometric and algebraic question to consider, after
Agler and McCarthy \cite{AM1}, is the existence of varieties in
$\D^n$ in the von Neumann inequality:

\vspace{0.1in}

\NI\textsf{Statement 3 (On a refined von Neumann inequality):} Let
$T \in \T^n(\clh)$. Then there exists an algebraic variety $V$,
depending on $T$, in $\D^n$ (or in $\overline{\mathbb{D}^n}$) such
that for all $p \in \mathbb{C}[z_1, \ldots, z_n]$, the following
holds:
\[
\|p(T)\|_{\clb(\clh)} \leq \sup_{\z \in V} |p(\z)|.
\]

As we hinted earlier, Statement 2, and hence Statement 1, fails
spectacularly in the sense that the von Neumann inequality does not
hold in general for $n \geq 3$. This result is due to Varopoulos
\cite{V} and Crabb and Davie \cite{CD1}. On the other hand, by
presenting a list of elementary counterexamples, Parrott \cite{Par}
proved that triples of commuting contractions do not, in general,
possess commuting isometric dilations. We refer the reader
interested in deep subtleties of von Neumann inequality for
$n$-tuples of commuting contractions to Choi and Davidson \cite{CD},
Drury \cite{Dr}, Holbrook \cite{Ho1, Ho2}, Knese \cite{K1},
Kosi\'{n}ski \cite{LK} and Pisier \cite{Pi}.

We also point out here an important difference between the Sz.-Nagy
and Foias dilation \cite{NF} for contractions and Ando dilation
\cite{An} for pairs of commuting contractions. In the former case,
the dilating isometries are explicit in the sense of the classical
Wold and von Neumann decomposition \cite{NF}. In the latter case,
dilating pairs of commuting isometries are complicated and mostly
unclassified. This leads us to a reformulation of Statement 1:

\NI \textsf{Statement 1* (On explicit isometric dilations):}
\textit{Let $T \in \T^n(\clh)$. Then there exist a Hilbert space
$\clk (\supseteq \clh)$ and an $n$-tuple of explicit (or tractable)
commuting isometries $V \in \T^n(\clk)$ such that $T$ dilates to
$V$.}

We refer the reader to \cite{AM1}, \cite{DS} and \cite{DSS} for
classes of pairs of commuting contractions with explicit (or
tractable) dilating isometries.

The above discussion leads naturally to the question of determining
$n$-tuples of operators in $\T^n(\clh)$, $n \geq 3$, satisfying
Statements 1, 2,  3 and 1*. This research direction is
still mostly unexplored except for the work of Grinshpan,
Kaliuzhnyi-Verbovetskyi, Vinnikov and Woerdeman \cite{VV}. More
specifically, and elegantly, Grinshpan, Kaliuzhnyi-Verbovetskyi,
Vinnikov and Woerdeman \cite{VV} proved the validity of Statement 2
for a large class of $n$-tuples of commuting strict contractions, $n
\geq 3$. In other words, if an $n$-tuple, $n \geq 3$, of commuting
strict contractions $T$ obeys certain positivity condition, then the
open unit polydisc is a spectral set for $T$. This also yields,
following Arveson's notion of completely bounded maps (see
\cite{A1}, \cite{A2} and Corollary 4.9 in \cite{Pi}), existence of
unitary dilations for those $n$-tuples of commuting strict
contractions. The main stimulus for their work was provided by
scattering theory, Schur-Agler class of functions and de
Branges-Rovnyak models \cite{LDR} in several variables. This is also
the spirit behind results by Cotlar and Sadosky \cite{MCS}, Agler
and McCarthy \cite{AM2}, Eschmeier and Putinar \cite{EP} and many
more.

In this paper, we introduce a large class, namely $\T^n_{p,q}(\clh)$
(see Subsection \ref{sub-Tnpq}), of $n$-tuples, $n \geq 3$, of
commuting contractions and show that they dilate to $n$-tuples of
explicit commuting isometries. Therefore, Statement 1* and hence
Statement 1 holds for tuples in $\T^n_{p,q}(\clh)$. This also allows
us to prove the von Neumann inequality for tuples in
$\T^n_{p,q}(\clh)$ (that is, Statement 2 holds). In particular, in a
larger context (see the examples in Subsection 2.3), we prove that
the Grinshpan, Kaliuzhnyi-Verbovetskyi, Vinnikov and Woerdeman's
$n$-tuples of operators \cite{VV} admit explicit isometric dilations
and hence yield the von Neumann inequality. Our recipe even provides
sharper results with new proofs of the results of Grinshpan,
Kaliuzhnyi-Verbovetskyi, Vinnikov and Woerdeman. Here, however, our
treatment of dilations and von Neumann inequality is conceptually
different. Our von Neumann inequality is even stronger for finite
rank $n$-tuples of operators in the sense of algebraic varieties
(and so, Statement 3 holds). Furthermore, our technique offers some
geometric, analytic and algebraic structural insight into the
positivity assumptions of $n$-tuples of operators. Our methodology
is motivated by the Hilbert module approach to multivariable
operator theory (cf. \cite{Sar}).

The rest of the paper is organized as follows. Section 2 introduces
terminology used throughout this paper. This section also gives a
list of motivating and non-trivial examples of tuples of commuting
contractions. Section 3 establishes the existence, with explicit
constructions, of isometric dilations for a large class of finite
rank $n$-tuples of commuting contractions. Using the isometric
dilations, in Section 4, we obtain a refined version of von Neumann
inequality (in terms of an algebraic variety) for finite rank
$n$-tuples of commuting contractions. Finally, in Section 5 we
consider the more general problem of describing isometric dilations
for $n$-tuples of commuting contractions. Sections 3 and 5 are
independent of each other.

\newsection{Definitions and Examples}

This section is aimed at providing definitions, motivating examples
and a known dilation theorem on $n$-tuples of commuting
contractions. First, we introduce some standard notation that will
be used in this paper. We denote
\[
\mathbb{Z}_+^n = \{\bm{k} = (k_1, \ldots, k_n) : k_i \in
\mathbb{Z}_+, i = 1, \ldots, n\}.
\]
Also for each multi-index $\bm{k} \in \mathbb{Z}_+^n$, commuting
tuple $T = (T_1, \ldots, T_n)$ on a Hilbert space $\clh$, and $\z
\in \mathbb{C}^n$ we denote
\[
T^{\bm{k}} = T_1^{k_1} \cdots T_n^{k_n},
\]
and
\[
\z^{\bm{k}} = z_1^{k_1} \cdots z_n^{k_n}.
\]

We begin with the definition of isometric dilations for $n$-tuples
of commuting contractions.

\subsection{Dilations of commuting tuples}\label{ss-dilation}

Let $\clh$ and $\clk$ be Hilbert spaces, and let $T \in \T^n(\clh)$
and $V \in \T^n(\clk)$. Then $V$ is said to be an \textit{isometric
dilation} of $T$ if $V$ is an $n$-tuple of commuting isometries and
there exists an isometry $\Pi : \clh \raro \clk$ such that $\Pi
T_i^* = V_i^* \Pi$ for all $i = 1, \ldots, n$. We also say that $T$
\textit{dilates} to $V$.

\NI In this case, for $\bm{k} \in \Z^n$, we have
\[
\Pi T^{*\bm{k}} = V^{* \bm{k}} \Pi,
\]
and so
\[
\Pi T^{*\bm{k}} \Pi^* = V^{* \bm{k}} \Pi\Pi^*,
\]
since
\[
(\Pi \Pi^*) V^{* \bm{k}} (\Pi \Pi^*) = V^{* \bm{k}} (\Pi \Pi^*),
\]
or, equivalently $V^{*\bm{k}} \clq \subseteq \clq$, where
\[
\clq = \textit{ran}\ \Pi.
\]
This immediately yields the following: $(T_1, \ldots, T_n)$ on
$\clh$ and $(P_{\clq} V_1|_{\clq}, \ldots, P_{\clq} V_n|_{\clq})$ on
$\clq$ are unitarily equivalent under the isometric isomorphism $\Pi
: \clh \raro \clq$, and
\[
(P_{\clq} V|_{\clq})^{* \bm{k}} = V^{* \bm{k}}|_{\clq},
\]
for all $\bm{k} \in \Z^n$. Here $P_{\clq}$ is the orthogonal
projection of $\clk$ onto $\clq$. Therefore, the $n$-tuple $T$ has a
power dilation to the $n$-tuple of commuting isometries $V$, in the
classical sense of Sz.-Nagy and Foias and Halmos.

The following example of isometric dilation is typical: Let
$H^2(\D^n)$, the Hardy space over $\D^n$, be the space of all
analytic functions $f = \sum_{\bm{k} \in \Z^n} a_{\bm{k}}
\z^{\bm{k}}$ on $\D^n$ for which the norm
\[
\|f\|_{H^2(\D^n)} = (\sum_{\bm{k} \in \Z^n}
|a_{\bm{k}}|^2)^{\frac{1}{2}} < \infty.
\]
Let $(M_{z_1}, \ldots, M_{z_n})$ denote the $n$-tuple of
multiplication operators on $H^2(\D^n)$ defined by
\[
(M_{z_i} f)(\w) = w_i f(\w),
\]
for all $f \in H^2(\D^n)$, $\w \in \D^n$ and $i = 1, \ldots,
n$.
Then $(M_{z_1}, \ldots, M_{z_n})$ is an
$n$-tuple of commuting isometries $(M_{z_1}, \ldots, M_{z_n})$ on
the Hardy space $H^2(\D^n)$. Now for a joint
$(M_{z_1}^*, \ldots, M_{z_n}^*)$-invariant subspace $\clq$ of $H^2(\D^n)$,
consider
\[
T_j = P_{\clq} M_{z_j}|_{\clq},
\]
and
\[
\Pi = i,
\]
where $i : \clq \hookrightarrow H^2(\D^n)$ is the natural inclusion
map. Then
\[
\Pi T_j^* = M_{z_j}^* \Pi,
\]
for all $j=1, \ldots, n$. This implies that $(M_{z_1}, \ldots,
M_{z_n})$ on $H^2(\D^n)$ is an isometric dilation of $(T_1, \ldots,
T_n)$ on $\clq$.

\subsection{Hardy space and dilations}

We denote by $\mathbb{S}_n$ the Szeg\"{o} kernel on $\D^n$, that is,
\[\mathbb{S}_n(\z,\w)= \prod_{i=1}^n(1-z_i\bar{w_i})^{-1},
\]
for all $\z, \w \in \D^n$. Then $H^2(\D^n)$ is known to be a
reproducing kernel Hilbert space with kernel $\mathbb{S}_n$. If
$\cle$ is a Hilbert space, then $H^2_{\cle}(\D^n)$ denotes the
$\cle$-valued Hardy space over $\D^n$. Also as usual,
$H^2_{\cle}(\D^n)$ will be identified with the Hilbert space tensor
product $H^2(\D^n) \otimes \cle$ via the natural unitary map
$\z^{\bm{k}} \eta \mapsto \z^{\bm{k}}\otimes \eta$ for all $\bm{k}
\in \Z^n$ and $\eta \in \cle$. It is a well-known fact that
$H^2_{\cle}(\D^n)$ is a reproducing kernel Hilbert space on $\D^n$
corresponding to the $\clb(\cle)$-valued kernel function
\[
(\z, \w) \in \D^n \times \D^n \raro \mathbb{S}_n(\z, \w) I_{\cle}.
\]
The $n$-tuple of multiplication operators $(M_{z_1}, \ldots, M_{z_n})$
on $H^2_{\cle}(\D^n)$ defined analogously by
\[
(M_{z_i} f)(\w) = w_i f(\w),
\]
for all $f \in H^2_{\cle}(\D^n)$, $\w \in \D^n$ and $i = 1, \ldots,
n$. Let $H^\infty_{\clb(\cle)}(\D^n)$ denote the set of all bounded
$\clb(\cle)$-valued analytic functions on $\D^n$. The following is a
well-known fact (cf. page 655 in \cite{BLTT}): If $X \in
\clb(H^2_{\cle}(\D^n))$, then $X M_{z_i} = M_{z_i} X$ if and only if
$X = M_{\Theta}$ for some $\Theta \in H^\infty_{\clb(\cle)}(\D^n)$.
Now note that
\[
\mathbb{S}_n^{-1}(\z, \w) = \sum_{\bm{k} \in \{0,1\}^n}
(-1)^{|\bm{k}|} \z^{\bm{k}} \bar{\w}^{\bm{k}},
\]
where $|\bm{k}| = \sum_i k_i$, $\bm{k} \in \Z^n$. With this
motivation, for every $T \in \T^n(\clh)$ we set
\[
\mathbb{S}_n^{-1}(T, T^*) = \sum_{\bm{k} \in \{0,1\}^n}
(-1)^{|\bm{k}|} T^{\bm{k}} T^{*\bm{k}}.
\]
The set of all $T \in \T^n(\clh)$ with $\mathbb{S}_n^{-1}(T, T^*)
\geq 0$ will be denoted by $\mathbb{S}_n(\clh)$, that is
\[
\mathbb{S}_n(\clh) = \{ T \in \T^n(\clh) : \mathbb{S}_n^{-1}(T, T^*)
\geq 0\}.
\]
A tuple $T = (T_1, \ldots, T_n) \in \T^n(\clh)$ is said to be
\textit{pure} if $\|T^{*m}_i h\| \raro 0$ for all $h \in \clh$ and
$i = 1, \ldots, n$.

The following theorem on pure $n$-tuples in $\mathbb{S}_n(\clh)$ is
one of the most definite and significant results in multivariable
dilation theory (see \cite{CV1} and \cite{MV}).

\begin{thm}\label{th-dc dil}
Let $T \in \mathbb{S}_n(\clh)$ be a pure tuple. If
\[
\Dt = \mathbb{S}_n^{-1}(T, T^*)^{1/2},
\]
and
\[
\cld_T =\overline{\text{ran}}~ \mathbb{S}_n^{-1}(T, T^*),
\]
then $\Pi: \Hil\to H^2_{\cld_T}(\D^n)$ defined by
\[
(\Pi h)(\z) = \sum_{\bm{k} \in \Z^n} {\z}^{\bm{k}} D_T T^{*\bm{k}} h,
\]
for all $\z \in \D^n$ and $h \in \clh$, is an isometry and $\Pi
T_i^* = M_{z_i}^*\Pi$ for all $i = 1, \ldots, n$. In particular, $T$
on $\clh$ dilates to $(M_{z_1}, \ldots, M_{z_n})$ on
$H^2_{\cld_T}(\D^n)$.
\end{thm}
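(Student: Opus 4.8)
The plan is to verify directly that the operator $\Pi$ is an isometry and then establish the intertwining relation $\Pi T_i^* = M_{z_i}^* \Pi$. I would begin by checking that $\Pi$ is well-defined and computing $\|\Pi h\|^2$ for $h \in \clh$. By definition of the norm on $H^2_{\cld_T}(\D^n)$, we have
\[
\|\Pi h\|^2 = \sum_{\bm{k} \in \Z^n} \|D_T T^{*\bm{k}} h\|^2 = \sum_{\bm{k} \in \Z^n} \langle D_T^2 T^{*\bm{k}} h, T^{*\bm{k}} h \rangle = \sum_{\bm{k} \in \Z^n} \langle T^{\bm{k}} \mathbb{S}_n^{-1}(T, T^*) T^{*\bm{k}} h, h \rangle.
\]
The key algebraic identity to exploit is that summing $T^{\bm{k}} \mathbb{S}_n^{-1}(T,T^*) T^{*\bm{k}}$ over all $\bm{k}$ telescopes. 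Writing $\mathbb{S}_n^{-1}(T,T^*) = \sum_{\bm{j} \in \{0,1\}^n} (-1)^{|\bm{j}|} T^{\bm{j}} T^{*\bm{j}}$ and summing against powers of $T$ and $T^*$ produces a product structure: the sum over each coordinate $k_i \in \Z$ of the one-variable telescoping difference $T_i^{k_i}(I - T_i T_i^*)T_i^{*k_i}$ collapses.

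The central step is to show that the partial sums $\sum_{\bm{0} \le \bm{k} \le \bm{N}} T^{\bm{k}} \mathbb{S}_n^{-1}(T, T^*) T^{*\bm{k}}$ telescope to $I - (\text{boundary terms involving } T^{*(N_i+1)})$, and then to invoke purity to discard the boundary terms in the limit. Concretely, I expect the finite sum to equal $\prod_{i=1}^n (I - T_i^{N_i+1} T_i^{*(N_i+1)})$ acting appropriately, so that as each $N_i \to \infty$ the purity hypothesis $\|T_i^{*m} h\| \to 0$ forces every term except the leading $I$ to vanish, giving $\|\Pi h\|^2 = \|h\|^2$. This is the main obstacle: the telescoping in several variables must be organized carefully as an iterated (coordinatewise) sum, and one must justify the interchange of limits and confirm that purity in each variable separately suffices to kill all the mixed boundary contributions, not merely the top corner term.

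Finally, the intertwining relation follows from a direct computation. For fixed $i$, I would compute $(M_{z_i}^* \Pi h)(\z)$ using the standard formula for the backward shift $M_{z_i}^*$ on $H^2_{\cld_T}(\D^n)$, which lowers the $i$-th index, and compare it to $(\Pi T_i^* h)(\z) = \sum_{\bm{k}} \z^{\bm{k}} D_T T^{*\bm{k}} T_i^* h$. Re-indexing the sum for $\Pi T_i^*$ by shifting $k_i \mapsto k_i + 1$ should match the backward-shift expression coefficient by coefficient, using commutativity of the $T_j$. The ``in particular'' clause is then immediate from the definition of isometric dilation in Subsection~\ref{ss-dilation}, since $\Pi$ is an isometry intertwining $T_i^*$ with $M_{z_i}^*$.
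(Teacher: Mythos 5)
Your argument is correct, and in fact the paper offers no proof of this statement to compare against: Theorem \ref{th-dc dil} is quoted as a known result with a pointer to Curto--Vasilescu \cite{CV1} and M\"{u}ller--Vasilescu \cite{MV}. Your direct verification is the standard argument from those sources. The one place where your wording (``$\prod_{i=1}^n (I - T_i^{N_i+1}T_i^{*(N_i+1)})$ acting appropriately'') needs to be pinned down is the precise form of the telescoped partial sum: it is not the operator product of the factors $I - T_i^{N_i+1}T_i^{*(N_i+1)}$, but the hereditary expression
\[
\sum_{\bm{j}\in\{0,1\}^n}(-1)^{|\bm{j}|}\,T_1^{j_1(N_1+1)}\cdots T_n^{j_n(N_n+1)}\,T_n^{*j_n(N_n+1)}\cdots T_1^{*j_1(N_1+1)},
\]
i.e.\ the image of $I$ under $\prod_i(I-C_{T_i}^{N_i+1})$ where $C_{T_i}(X)=T_iXT_i^*$; these commuting completely positive maps are exactly what makes the coordinatewise telescoping you describe legitimate, and the two expressions differ unless the tuple is doubly commuting. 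With that reading, every term with $\bm{j}\neq 0$ contributes $\|T^{*\bm{j}(\bm{N}+\bm{1})}h\|^2\le\|T_i^{*(N_i+1)}h\|^2$ for any $i$ with $j_i=1$, so purity kills all boundary terms, and since the summands $T^{\bm{k}}D_T^2T^{*\bm{k}}$ are positive the partial sums form a monotone net, which settles the interchange of limits you flagged. The intertwining computation and the appeal to Subsection \ref{ss-dilation} for the ``in particular'' clause are fine as written.
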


In the sequel, the isometry $\Pi$ defined in the above theorem will
be referred to as \textit{canonical isometry} corresponding to $T$.

\subsection{Commuting tuples in $\T^n_{p,q}(\clh)$}\label{sub-Tnpq}

We now introduce the central object of this paper.

\textsf{Let $\clh$ be a Hilbert space, and let $n \geq 3$ and $1\le
p<q\le n$ be fixed throughout the article.} Let $T \in \T^n(\clh)$.
For each $i \in \{1, \ldots, n\}$, we define
\[
\hat{T}_{i} = (T_1,\ldots,T_{i-1}, T_{i+1},T_{i+2}, \ldots, \dots,T_n) \in \T^{(n-1)}(\clh),
\]
the $(n-1)$-tuple obtained from $T$ by removing $T_i$. Define
\[
\T_{p,q}^n(\clh) = \{ T \in \T^n(\clh) : \h_p, \h_q \in
\mathbb{S}_{n-1}(\clh) \mbox{~and~} \h_p \mbox{~is pure}\}.
\]

For example, let $n = 3$, $p = 1$ and $q = 2$. Then $(T_1, T_2, T_3)
\in \T^3_{1,2}(\clh)$ if and only if:

\NI (i) $\|T_i\| \leq 1$ for all $i = 1, 2, 3$,

\NI (ii) $\h_1 = (T_2, T_3)$ is pure (that is, $\|T_i^{*m} h \|
\raro 0$ as $m \raro \infty$ for all $h \in \clh$ and $i = 2, 3$),

\NI (iii) $\mathbb{S}^{-1}_2(\h_1, \h_1^*) = I - T_2 T_2^* - T_3
T_3^* + T_2 T_3 T_2^* T_3^* \geq 0$, and

\NI (iv) $\mathbb{S}^{-1}_2(\h_2, \h_2^*) = I - T_1 T_1^* - T_3
T_3^* + T_1 T_3 T_1^* T_3^* \geq 0$.

Under the additional assumption that $\|T_i\| < 1$, $i = 1, \ldots,
n$, the above class of $n$-tuples of commuting contractions has been
studied, and denoted by $\clp_{p,q}^n(\clh)$, by Grinshpan,
Kaliuzhnyi-Verbovetskyi, Vinnikov and Woerdeman in \cite{VV}. It is
easy to see that $\|T_i\| < 1$, $i = 1, \ldots, n$, implies that
$(T_1, \ldots, T_n)$ is a pure tuple. More specifically, for every
$1 \leq p < q \leq n$, it is immediate that
\[
(M_{z_1}, \ldots, M_{z_n}) \in \T^n_{p,q}(H^2(\D^n)),
\]
but
\[
(M_{z_1}, \ldots, M_{z_n}) \notin \clp_{p,q}^n(H^2(\D^n)),
\]
and so
\[
\clp_{p,q}^n(H^2(\D^n)) \subsetneq \T^n_{p,q}(H^2(\D^n)).
\]
It should be noted, however, that $\clp_{p,q}^n(\clh)$ is a dense
subset of $\T^n_{p,q}(\clh)$ for any Hilbert space $\clh$. This
follows from the fact that if $T\in \T^n(\clh)$ and
$\mathbb{S}^{-1}_n(T,T^*)\ge 0$, then for any $0<r<1$

\[
 \mathbb{S}^{-1}_n(rT,rT^*)\ge 0,
\]
where
$rT=(rT_1,\dots,rT_n)$.

\subsection{Transfer functions}\label{sub-trans}

Our approach to isometric dilations and refined von Neumann
inequality will rely on the theory of transfer functions. Let
$\clh$, $\cle$ and $\cle_*$ be Hilbert spaces, and let $U: \cle
\oplus \Hil \rightarrow \cle_* \oplus \Hil$ be a unitary operator.
Assume that
\[
U=\begin{bmatrix}A &B\\ C& D\end{bmatrix} : \cle \oplus \Hil \raro
\cle_* \oplus \Hil.
\]
Then the \textit{transfer function} $\tau_{U}$ corresponding to $U$
is defined by
\[
\tau_{U}(z) = A + B z (I_{\Hil}- D z)^{-1}C,
\]
for all $z\in\D$. Since $\|D\| \leq 1$, and so $\|z D \| < 1$ for
all $z \in \D$, it follows that $\tau_U$ is a $\mathcal{B}(\cle,
\cle_*)$-valued analytic function on $\D$. Moreover, a standard and
well-known computation (cf. \cite{AMbook}) yields that
\begin{equation}\label{identity}
I - \tau_U(z)^* \tau_{U}(z) = (1 - |z|^2) C^* (I_{\Hil} - \bar{z}
D^*)^{-1}(I_{\Hil} - z D)^{-1}C,
\end{equation}
for all $z \in \D$. In particular, $\tau_U \in H^\infty_{\clb(\cle,
\cle_*)}(\D)$ and $\|M_{\tau_U}\| \leq 1$, that is, $\tau_U$ is a
contractive multiplier. We refer the reader to the monograph by
Agler and McCarthy \cite{AMbook} for more details.

\newsection{Dilations for finite rank tuples in $\T^n_{p,q}(\clh)$}
\label{3}

Let $T \in \T^n_{p,q}(\clh)$. We say that $T$ is of \textit{finite
rank} if
\[
\dim \cld_{\h_i} < \infty,
\]
for all $i = p, q$. In this section we find
explicit dilation for a finite rank $n$-tuple of commuting
contractions in $\T^n_{p,q}(\clh)$. Our (explicit) dilation result
seems to be especially more useful in studying refined von Neumann
inequality. Recall that for $T \in \T^n(\clh)$ and $i \in \{1,
\ldots, n\}$, $\h_i$ is defined as
\[
\hat{T}_{i} = (T_1,\ldots,T_{i-1}, T_{i+1},T_{i+2}, \ldots,T_n) \in \T^{(n-1)}(\clh).
\]
Let us also introduce the following notations, which will be
extensively used in the sequel. For $T \in \T^n(\clh)$, define
\begin{equation}\label{eq-Tn-2}
\hat{T}_{p,q} = (T_1,\dots,T_{p-1},
T_{p+1},\dots,T_{q-1},T_{q+1},\dots,T_n) \in \T^{(n-2)}(\clh),
\end{equation}
the $(n-2)$-tuple obtained from $T$ by deleting $T_p$ and $T_q$, and
\begin{equation}\label{eq-Tn-1}
\hat{T}_{pq} = (T_1,\dots,T_{p-1}, \underbrace{T_pT_q}_{p^{th}
~place},T_{p+1},\dots,T_{q-1},T_{q+1},\dots,T_n) \in
\T^{(n-1)}(\clh),
\end{equation}
the $(n-1)$-tuple obtained from $T$ by removing $T_q$ and replacing
$T_p$ by the product $T_pT_q$.

We begin with the following useful lemma on defect operators.

\begin{lemma}\label{lem-defect}
If $T \in \T_{p,q}^n(\clh)$, then
\[
D^2_{\h_{pq}} = \Dtp^2 + T_q\Dtq^2 T_q^*= T_p\Dtp^2 T_p^*+ \Dtq^2.
\]
\end{lemma}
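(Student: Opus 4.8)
The plan is to reduce the whole identity to the single $(n-2)$-variable inverse Szeg\"{o} operator
\[
G := \mathbb{S}_{n-2}^{-1}(\h_{p,q}, \h_{p,q}^*) = \sum_{\bm{k} \in \{0,1\}^{n-2}} (-1)^{|\bm{k}|} \h_{p,q}^{\bm{k}} \h_{p,q}^{*\bm{k}},
\]
attached to the $(n-2)$-tuple $\h_{p,q}$ obtained by deleting both $T_p$ and $T_q$. The key observation is that each of $\h_p$, $\h_q$ and $\h_{pq}$ is obtained from $\h_{p,q}$ by adjoining exactly one extra, mutually commuting operator, namely $T_q$, $T_p$ and $T_pT_q$ respectively, and the inverse Szeg\"{o} operator of such an enlargement factors cleanly through $G$.

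First I would record the three factorizations
\[
\Dtp^2 = G - T_q\, G\, T_q^*, \qquad \Dtq^2 = G - T_p\, G\, T_p^*, \qquad D^2_{\h_{pq}} = G - T_pT_q\, G\, T_q^*T_p^*.
\]
Each follows by splitting the defining subset-sum of $\mathbb{S}_{n-1}^{-1}$ according to whether the index of the adjoined operator is present: the terms omitting it reassemble to $G$, while the terms containing it, say $T_q$, have the form $-(T_q R)(T_q R)^* = -T_q (R R^*) T_q^*$, where $R$ ranges over the subproducts of the entries of $\h_{p,q}$. Since $T_q$ commutes with every entry of $\h_{p,q}$, both $T_q$ and $T_q^*$ can be pulled outside the entire sum, producing $-T_q G T_q^*$; the case of $\h_{pq}$ is identical with $T_q$ replaced by the commuting product $T_pT_q$, so that $(T_pT_q)^* = T_q^*T_p^*$.

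With these three identities in hand the result is pure algebra: substituting the first two into $\Dtp^2 + T_q \Dtq^2 T_q^*$ causes the two copies of $T_q G T_q^*$ to cancel and leaves $G - T_qT_p\, G\, T_p^*T_q^*$, whereupon the commutation relations $T_pT_q = T_qT_p$ and $T_p^*T_q^* = T_q^*T_p^*$ identify this with $G - T_pT_q\, G\, T_q^*T_p^* = D^2_{\h_{pq}}$; the symmetric substitution into $T_p \Dtp^2 T_p^* + \Dtq^2$ yields the same expression. The one place to be careful, and the only genuine obstacle, is the ordering in these manipulations: one cannot factor $G$ itself into a product of one-variable defect operators, since the naive product $\prod_i(I - T_iT_i^*)$ is false because $T_i$ and $T_j^*$ need not commute. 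Thus $G$ must be carried along as a single positive operator throughout, and the only commutations invoked are those among the $T_i$ and, separately, among the $T_i^*$ — which are precisely the relations available for a commuting tuple.
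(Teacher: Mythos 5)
Your proof is correct and follows essentially the same route as the paper: both reduce everything to the operator $G = \mathbb{S}_{n-2}^{-1}(\h_{p,q},\h_{p,q}^*) = D^2_{\h_{p,q}}$, record the three factorizations $\Dtp^2 = G - T_qGT_q^*$, $\Dtq^2 = G - T_pGT_p^*$, $D^2_{\h_{pq}} = G - T_pT_q\,G\,T_q^*T_p^*$, and finish by the same substitution-and-cancellation. Your cautionary remark about not factoring $G$ into $\prod_i(I-T_iT_i^*)$ is a sensible aside but plays no role in the argument, which otherwise matches the paper's.
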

\begin{proof}
Since by definition
\[
D^2_{\h_{p,q}} = \mathbb{S}_{n-2}^{-1}(\h_{p, q}, \h_{p,q}^*),
\]
it follows that
\[
D_{\h_p}^2 = D^2_{\h_{p,q}} - T_q D^2_{\h_{p,q}} T_q^*,
\]
and
\[
D_{\h_q}^2 = D_{\h_{p,q}}^2 - T_p D_{\h_{p,q}}^2 T_p^*.
\]
Then
\begin{align*}
D_{\h_{pq}}^2 & = \mathbb{S}_{n-1}^{-1}(\h_{pq}, \h_{pq}^*)
\\
&= D_{\h_{p,q}}^2 - T_pT_q D_{\h_{p,q}}^2 T_p^*T_q^*
\\
&= D_{\h_{p,q}}^2 - T_p D_{\h_{p,q}}^2 T_p^* + T_p(D_{\h_{p,q}}^2 -
T_q D_{\h_{p,q}}^2 T_q^*) T_p^*,
\end{align*}
that is
\begin{equation}\label{eq-defect}
D_{\h_{pq}}^2 = \Dtq^{2}+T_{p}\Dtp^2T_{p}^*,
\end{equation}
and similarly
\[
D_{\h_{pq}}^2 = \Dtp^{2}+T_{q}\Dtq^2T_{q}^*.
\]
This completes the proof of the lemma.
\end{proof}

Therefore, if $T \in \T_{p,q}^n(\clh)$, then it follows clearly from
the above lemma that the map
\[
U: \{\Dtp h\oplus \Dtq T_q^* h: h\in \Hil\}\to \{\Dtp T_p^* h\oplus
\Dtq h: h\in\Hil\}
\]
defined by
\[
\label{isometry1} U(\Dtp h, \Dtq T_q^* h)= (\Dtp T_p^* h, \Dtq h),
\]
for all $h\in\Hil$, is an isometry. In addition, if
\[
\dim \cld_{\h_i} < \infty,
\]
for all $i = p, q$, then $U$ extends to a unitary on
$\cld_{\h_p} \oplus \cld_{\h_q}$, which we denote again by $U$. This
implies the first part of the lemma below.

\begin{lemma}\label{unitary1}
If $T \in \T^n_{p,q}(\clh)$ is a finite rank tuple, then there
exists a unitary $U \in \clb(\cld_{\h_p} \oplus \cld_{\h_q})$ such
that
\[
U(\Dtp h, \Dtq T_q^* h)= (\Dtp T_p^* h, \Dtq h),
\]
for all $h \in \clh$. Moreover, if
\[
U = \begin{bmatrix} A& B\\ C& D\end{bmatrix} : \cld_{\h_p} \oplus
\cld_{\h_q} \raro \cld_{\h_p} \oplus \cld_{\h_q},
\]
then
\[
\Dtp T_p^*= A\Dtp + \sum_{i=0}^{\infty} B D^i C\Dtp T_q^{* i+1},
\]
where the series converges in the strong operator topology.
\end{lemma}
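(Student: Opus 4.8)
The plan is to read off two operator identities from the defining relation of $U$ and then feed one into the other iteratively. Writing $U = \begin{bmatrix} A & B \\ C & D \end{bmatrix}$ and evaluating the identity $U(\Dtp h, \Dtq T_q^* h) = (\Dtp T_p^* h, \Dtq h)$ block by block, I would first record the two equations
\[
\Dtp T_p^* h = A \Dtp h + B \Dtq T_q^* h \quad \text{and} \quad \Dtq h = C \Dtp h + D \Dtq T_q^* h,
\]
valid for every $h \in \clh$.

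The second identity, with $h$ replaced by $T_q^* h$, expresses $\Dtq T_q^* h$ in terms of $\Dtp T_q^* h$ and $\Dtq T_q^{*2} h$; substituting this into the first identity and then repeating the substitution generates the series. Concretely, I would prove by induction on $N$ that
\[
\Dtp T_p^* h = A \Dtp h + \sum_{i=0}^{N-1} B D^i C \Dtp T_q^{* i+1} h + B D^N \Dtq T_q^{* N+1} h,
\]
where the inductive step amounts to applying the second identity above with $h$ replaced by $T_q^{* N+1} h$ to the remainder term $B D^N \Dtq T_q^{* N+1} h$, which splits it into $B D^N C \Dtp T_q^{* N+1} h$ (the next summand) plus $B D^{N+1} \Dtq T_q^{* N+2} h$ (the new remainder).

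The crux is then to show that the remainder $B D^N \Dtq T_q^{* N+1} h$ tends to $0$ as $N \to \infty$. Since $U$ is unitary, every block satisfies $\|D\| \le 1$ and $\|B\| \le 1$, and $\Dtq$ is bounded, so the remainder is controlled by $\|\Dtq\|\, \|T_q^{* N+1} h\|$. This is where the hypotheses on $T$ enter decisively: because $\h_p$ is pure and $q \neq p$, the operator $T_q$ is one of the entries of $\h_p$, whence $\|T_q^{*m} h\| \raro 0$ for every $h \in \clh$. Consequently the remainder vanishes, the partial sums converge for each fixed $h$, and passing to the limit yields exactly
\[
\Dtp T_p^* = A \Dtp + \sum_{i=0}^{\infty} B D^i C \Dtp T_q^{* i+1},
\]
with convergence in the strong operator topology. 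The only genuine obstacle is the control of the remainder; once purity of $\h_p$ supplies $T_q^{*m} h \raro 0$, the boundedness of $\Dtq$ together with the contractivity of $D$ makes the estimate routine.
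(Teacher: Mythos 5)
Your handling of the ``Moreover'' part is correct and is essentially the paper's own argument: read off the two block equations $\Dtp T_p^* h = A\Dtp h + B\Dtq T_q^* h$ and $\Dtq h = C\Dtp h + D\Dtq T_q^* h$, iterate the second inside the first to obtain the finite expansion with remainder $BD^{N}\Dtq T_q^{*(N+1)}h$, and kill the remainder using $\|B\|,\|D\|\le 1$ together with $T_q^{*m}h\raro 0$; your justification that purity of $\h_p$ gives $T_q^{*m}h\raro 0$ (because $q\neq p$, so $T_q$ is an entry of $\h_p$) is exactly the point the paper relies on.

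The gap is that you never establish the first assertion of the lemma, namely that a unitary $U$ with $U(\Dtp h, \Dtq T_q^* h)=(\Dtp T_p^* h, \Dtq h)$ exists, and this is precisely where the hypotheses $T\in\T^n_{p,q}(\clh)$ and ``finite rank'' do their work. One first needs the defect identity
\[
D_{\h_{pq}}^2 \;=\; \Dtp^2 + T_q\Dtq^2 T_q^* \;=\; T_p\Dtp^2 T_p^* + \Dtq^2,
\]
(the paper's Lemma on defect operators, obtained by expanding $\mathbb{S}_{n-1}^{-1}(\h_{pq},\h_{pq}^*)$ in terms of $\mathbb{S}_{n-2}^{-1}(\h_{p,q},\h_{p,q}^*)$; positivity of $\mathbb{S}_{n-1}^{-1}(\h_p,\h_p^*)$ and $\mathbb{S}_{n-1}^{-1}(\h_q,\h_q^*)$ is what makes $\Dtp,\Dtq$ well-defined positive square roots). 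This yields $\|\Dtp h\|^2+\|\Dtq T_q^*h\|^2=\|\Dtp T_p^*h\|^2+\|\Dtq h\|^2$, so the prescribed map is a well-defined isometry between two subspaces of $\cld_{\h_p}\oplus\cld_{\h_q}$. Only then does the finite rank hypothesis $\dim\cld_{\h_p},\dim\cld_{\h_q}<\infty$ allow you to extend it to a unitary on all of $\cld_{\h_p}\oplus\cld_{\h_q}$, since the orthogonal complements of its domain and range then have equal finite dimension; in the infinite-dimensional case one only gets a unitary after enlarging the space, which is exactly why the lemma carries this hypothesis. Your proposal uses neither ingredient because it silently takes $U$ as given, so the existence claim remains unproved.
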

\begin{proof}
We only need to prove the second part. Let $h \in \clh$. Using
\[
U(\Dtp h, \Dtq T_q^* h)= (\Dtp T_p^* h, \Dtq h),
\]
we obtain
\[
\begin{bmatrix}A&B\\C&D\end{bmatrix} \begin{bmatrix} \Dtp h\\ \Dtq T_q^* h\end{bmatrix}=
\begin{bmatrix} \Dtp T_p^* h\\ \Dtq h\end{bmatrix}.
\]
Then
\[
\Dtp T_p^*h= A \Dtp h+ B \Dtq T_q^* h,
\]
and
\[
\Dtq h= C\Dtp h+ D \Dtq T_q^* h.
\]
Repeatedly resolving the former equation for $\Dtp T_p^* h$ in the
latter equation, we obtain
\[
\Dtp T_p^* h= A\Dtp h + \sum_{i=1}^m B D^{i} C\Dtp T_q^{*(i+1)}h + B
D^{m+1}\Dtq T_q^{*(m +2)}h,
\]
for all $h\in\Hil$ and $m \geq 1$. The proof now follows from the
fact that $T_q^{*m} h \raro 0$ as $m \raro \infty$ and $\|D\| \leq
1$.
\end{proof}

The proof of the second half of Lemma \ref{unitary1}, motivated by
\cite{DS}, will play an important role in what follows.

\begin{thm}\label{dilation1}
If $T \in \T^n_{p,q}(\clh)$ is a finite rank tuple, then there exist
an isometry $\Pi: \Hil \raro H^2_{\cld_{\h_p}}(\D^{n-1})$ and an
inner function $\varphi \in H^\infty_{\clb(\cld_{\h_p})}(\D)$ such
that
\[
\Pi T_i^* = \left \{\begin{array}{ll}
M_{z_i}^* \Pi \quad \quad \;\;\;\;\text{if~} 1\le i< p,\\
M_{\Phi_{p}}^* \Pi \quad \quad \;\;\;\text{if~} i = p,\\
M_{z_{i-1}}^* \Pi \quad \quad \;\text{if~} p < i\le n,
\end{array}
\right.
\]
where
\[
\Phi_{p}(\z) = \varphi(z_{q-1}),
\]
for all $\z \in \D^{n-1}$. In particular, $T$ on $\clh$ dilates to
the $n$-tuple of commuting isometries
\[
(M_{z_1},\dots,M_{z_{p-1}},M_{\Phi_{p}},M_{z_{p}},\dots,M_{z_{n-1}})
\]
on $H^2_{\cld_{\h_p}}(\D^{n-1})$.
\end{thm}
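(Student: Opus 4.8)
The plan is to pull everything back to the single pure tuple $\h_p$ and a one–variable inner function manufactured from the unitary $U$ of Lemma \ref{unitary1}. Since $T\in\T^n_{p,q}(\clh)$ forces $\h_p=(T_1,\dots,T_{p-1},T_{p+1},\dots,T_n)\in\mathbb S_{n-1}(\clh)$ to be pure, I would first apply Theorem \ref{th-dc dil} to $\h_p$, obtaining the canonical isometry $\Pi\colon\clh\to H^2_{\cld_{\h_p}}(\D^{n-1})$ with $(\Pi h)(\z)=\sum_{\bm k\in\Z^{n-1}}\z^{\bm k}\Dtp\,\h_p^{*\bm k}h$. Because the coordinates of $\h_p$ are listed in their natural order, the intertwining $\Pi(\cdot)^*=M_{z_\bullet}^*\Pi$ supplied by that theorem reads off directly: for $1\le i<p$ the operator $T_i$ occupies slot $i$, giving $\Pi T_i^*=M_{z_i}^*\Pi$, while for $p<i\le n$ it occupies slot $i-1$, giving $\Pi T_i^*=M_{z_{i-1}}^*\Pi$. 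In particular $T_q$ sits in slot $q-1$, so $\Pi T_q^*=M_{z_{q-1}}^*\Pi$, which is exactly why the inner multiplier must be inserted in the variable $z_{q-1}$. This settles every index except $i=p$.

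The second step is to produce $\varphi$ and verify the $i=p$ relation. I would define $\varphi:=\tau_{U^*}$, the transfer function of the adjoint colligation $U^*=\left[\begin{smallmatrix}A^*&C^*\\ B^*&D^*\end{smallmatrix}\right]$, so that $\varphi(z)=A^*+\sum_{j\ge1}C^*(D^*)^{j-1}B^*z^{j}$, and set $\Phi_p(\z)=\varphi(z_{q-1})$. Taking adjoints in the series formula of Lemma \ref{unitary1}, and using that $\Dtp$ is self-adjoint, yields
\[
T_p\Dtp=\Dtp A^*+\sum_{j\ge1}T_q^{\,j}\,\Dtp\,C^*(D^*)^{j-1}B^*=\sum_{j\ge0}T_q^{\,j}\,\Dtp\,\varphi_j,
\]
where $\varphi_j$ is the $j$-th Taylor coefficient of $\varphi$. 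To check $\Pi T_p^*=M_{\Phi_p}^*\Pi$ I would test against the spanning vectors $\z^{\bm k}\eta$, using $\Pi^*(\z^{\bm k}\eta)=\h_p^{\bm k}\Dtp\eta$ and $M_{\Phi_p}(\z^{\bm k}\eta)=\sum_j\z^{\bm k+je_{q-1}}\varphi_j\eta$: the identity $\h_p^{je_{q-1}}=T_q^{\,j}$ collapses both $\langle\Pi T_p^*h,\z^{\bm k}\eta\rangle$ and $\langle M_{\Phi_p}^*\Pi h,\z^{\bm k}\eta\rangle$, after factoring out the common $\h_p^{\bm k}$, to the displayed relation, so the two sides agree.

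The hard part will be proving that $\varphi$ is \emph{inner}, since this is precisely what makes $M_{\Phi_p}$ an isometry; the coordinate shifts $M_{z_i}$ are automatically commuting isometries and commute with the multiplier $M_{\Phi_p}$, so the isometry of $M_{\Phi_p}$ is the only non-formal point. Starting from \eqref{identity} applied to $U^*$ I obtain
\[
I-\varphi(z)^*\varphi(z)=(1-|z|^2)\,B\,(I-\bar z D)^{-1}(I-zD^*)^{-1}B^*,
\]
and integrating $\langle(I-\varphi^*\varphi)(re^{i\theta})\eta,\eta\rangle$ in $\theta$ with Parseval identifies the limit of the integral means as $\lim_m\|(D^*)^mB^*\eta\|^2$ (an Abel-summation step), so inner-ness is equivalent to the stability $(D^*)^mB^*\to0$ strongly. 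This is where the finite-rank hypothesis is essential: because $\cld_{\h_q}$ is finite dimensional I would decompose $D$ into its unitary part on a reducing subspace $\cls_u$ and its completely non-unitary part; the latter has spectral radius strictly less than $1$ in finite dimensions, hence its powers tend to $0$, whereas unitarity of $U$ forces $Bv=0$ for every $v\in\cls_u$ (from $\|v\|^2=\|Uv\|^2=\|Bv\|^2+\|Dv\|^2$ together with $\|Dv\|=\|v\|$). Thus $\operatorname{ran}B^*\perp\cls_u$, giving $(D^*)^mB^*\to0$ and the inner-ness of $\varphi$.

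Finally I would assemble the conclusion: $\Pi$ is one isometry simultaneously intertwining $T_1^*,\dots,T_n^*$ with the adjoints of $M_{z_1},\dots,M_{z_{p-1}},M_{\Phi_p},M_{z_p},\dots,M_{z_{n-1}}$, and this last $n$-tuple consists of commuting isometries on $H^2_{\cld_{\h_p}}(\D^{n-1})$ (commutativity being automatic for multiplication operators and the isometry of $M_{\Phi_p}$ being the inner-ness just established). By the definition of isometric dilation in Subsection \ref{ss-dilation}, this exhibits the asserted dilation, with the inner-ness of $\varphi$ as the single genuine obstacle in the argument.
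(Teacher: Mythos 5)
Your proposal is correct and follows essentially the same route as the paper: the canonical isometry of Theorem \ref{th-dc dil} applied to the pure tuple $\h_p$, the definition $\Phi_p(\z)=\tau_{U^*}(z_{q-1})$ with $U$ from Lemma \ref{unitary1}, and the verification of $\Pi T_p^*=M_{\Phi_p}^*\Pi$ against the vectors $\z^{\bm k}\eta$ using the series identity of that lemma (you work with the adjoint relation $T_p\Dtp=\sum_j T_q^{\,j}\Dtp\varphi_j$, the paper with $\Dtp T_p^*=A\Dtp+\sum_i BD^iC\Dtp T_q^{*(i+1)}$, which is the same computation). The only divergence is that you supply a full proof of inner-ness of $\tau_{U^*}$ (reducing it via \eqref{identity} and Abel summation to $(D^*)^mB^*\to0$, then using the unitary/completely non-unitary decomposition of $D$ in finite dimensions), a step the paper simply asserts as a standard consequence of \eqref{identity} and $\dim\cld_{\h_p}<\infty$; your argument for it is correct.
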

\begin{proof}
Since $\h_p \in \mathbb{S}_{n-1}(\clh)$ is a pure contraction, we
have
\[
\Pi T_i^* = \left \{\begin{array}{ll}
M_{z_i}^* \Pi \quad \quad \;\;\;\;\text{if~} 1\le i< p,\\
M_{z_{i-1}}^* \Pi \quad \quad \;\text{if~} p < i\le n,
\end{array}
\right.
\]
where $\Pi : \Hil \raro H^2_{\cld_{\h_p}}(\D^{n-1})$, defined by
\[
(\Pi h)(\z) = \sum_{\bm{k} \in \Z^{n-1}} \z^{\bm{k}} D_{\h_p} \h_p^{*
\bm{k}} h,
\]
for $\z \in \D^{n-1}$ and $h \in \clh$, is the canonical isometry
corresponding to $\h_p$ (see Theorem \ref{th-dc dil}). We prove that
\[
\Pi T_p^* = M_{\Phi_{p}} \Pi,
\]
for some one-variable (in $z_{q-1}$) inner function $\Phi_{p} \in
H^\infty_{\cld_{\h_p}}(\D^{n-1})$. To this end, consider $h\in\Hil$,
$\eta\in \cld_{\h_p}$ and $\bm{k} \in \Z^{n-1}$. Then
\[
\begin{split}
\langle \Pi T_p^*h, \z^{\bm{k}} \eta \rangle & = \langle \sum_{\bm{l}
\in \Z^{n-1}} \z^{\bm{l}} \Dtp \h_{p}^{* \bm{l}} T_p^* h, \z^{\bm{k}}
\eta \rangle
\\
& = \langle \Dtp T_p^* \h_{p}^{* \bm{k}} h, \eta \rangle.
\end{split}
\]
Next, consider the unitary
\[
U = \begin{bmatrix}A&B\\C&D\end{bmatrix} : \cld_{\h_p} \oplus
\cld_{\h_q} \raro \cld_{\h_p} \oplus \cld_{\h_q}
\]
as in Lemma \ref{unitary1}. Let
\[
\Phi_p(\z) = \tau_{U^*}(z_{q-1}),
\]
for all $\z \in \D^{n-1}$, where
\[
\tau_{U^*}(z) = A^*+zC^*(I_{\cld_{\h_q}} - zD^*)^{-1}B^*,
\]
for all $z\in\D$, is the transfer function corresponding to the
unitary map $U^*$. Since $\text{dim~} \cld_{\h_p} < \infty$, the
equality \eqref{identity} implies that $\tau_U$ is an inner
multiplier on $\D$. Also we compute
\[
\begin{split}
\langle M_{\Phi_p}^* \Pi h, \z^{\bm{k}} \eta \rangle & = \langle \Pi
h, \Phi_p (\z)(\z^{\bm{k}} \eta) \rangle
\\
&= \langle \sum_{\bm{l} \in \Z^{n-1}} \z^{\bm{l}} \Dtp {\h_p}^{*
\bm{l}} h, (A^*+ C^*\sum_{m=0}^{\infty} D^{* m}B^* z_{q-1}^{m+1})
\z^{\bm{k}} \eta \rangle
\\
& = \langle \Dtp \h_{p}^{*\bm{k}} h, A^*\eta\rangle+
\sum_{m=0}^{\infty} \langle \Dtp \h_{p}^{*\bm{k}}T_{q}^{* m+1} h,
C^*D^{*m}B^*\eta\rangle
\\
& = \langle A \Dtp \h_{p}^{*\bm{k}} h, \eta\rangle +
\sum_{m=0}^{\infty} \langle B D^m C \Dtp \h_{p}^{*\bm{k}}T_{q}^{*
m+1} h, \eta \rangle
\\
&=\langle (A \Dtp+ \sum_{m=0}^{\infty} B D^mC\Dtp T_q^{* m+1})
\h_{p}^{*\bm{k}} h, \eta\rangle,
\end{split}
\]
and so, by Lemma~\ref{unitary1}
\[
\langle M_{\Phi_p}^* \Pi h, \z^{\bm{k}} \eta \rangle = \langle \Dtp
T_p^* \h_{p}^{*\bm{k}} h, \eta \rangle.
\]
Thus
\[
\Pi T_p^* = M_{\Phi_p}^* \Pi.
\]
This completes the proof.
\end{proof}

The discussion in Subsection 2.1 gives another way to describe the
above dilation theorem: If $T \in \T^n_{p,q}(\clh)$, then $T$ and
\[
(P_{\clq} M_{z_1}|_{\clq},\ldots, P_{\clq} M_{z_{p-1}}|_{\clq} ,
P_{\clq} M_{\Phi_{p}}|_{\clq}, P_{\clq} M_{z_{p}}|_{\clq}, \ldots,
P_{\clq} M_{z_{n-1}}|_{\clq})
\]
on $\clq$ are jointly unitarily equivalent, where
\[
\clq = \text{ran~} \Pi \subseteq H^2_{\cld_{\h_p}}(\D^{n-1}),
\]
is a joint invariant subspace for
\[
(M_{z_1}^*, \ldots, M_{z_{p-1}}^*, M_{\Phi_{p}}^*, M_{z_{p}}^*,
\ldots, M_{z_{n-1}}^*).
\]

A natural question arises about the isometric dilation: What can be
said if the assumption of finite dimensionality in Theorem
\ref{dilation1} is removed? In the general case, the above ideas
allow one to prove that $\Phi_p$ is a contractive multiplier. We
proceed as follows: Let $\cld_{\h_p}$ or $\cld_{\h_q}$ is an infinite
dimensional Hilbert space. Let $\cld$ be an infinite dimensional
Hilbert space such that the isometry
\[
U: \{\Dtp h\oplus \Dtq T_q^* h: h\in \Hil\}\oplus \{0_{\cld}\}\to
\{\Dtp T_p^* h\oplus \Dtq h: h\in\Hil\}\oplus \{0_{\cld}\}
\]
defined by
\[
U(\Dtp h, \Dtq T_q^* h, 0_{\cld})= (\Dtp T_p^* h, \Dtq h, 0_{\cld}),
\]
for $h\in\Hil$, extends to a unitary, again denoted by $U$, on
\[
\cld_{\h_p} \oplus \cld_{\h_q} \oplus \cld.
\]
Then the same conclusion as in Lemma~\ref{unitary1} holds for the
unitary
\[
U = \begin{bmatrix}A&B\\C&D\end{bmatrix} \in \clb(\cld_{\h_p} \oplus
(\cld_{\h_q} \oplus \cld)).
\]
However, in this case, the transfer function $\tau_{U^*}$ is a
contractive analytic function. Then following the same line of
argument as in the proof of Theorem \ref{dilation1}, we have:

\begin{thm}
If $T \in \T^n_{p,q}(\clh)$, then there exist an isometry $\Pi: \Hil
\raro H^2_{\cld_{\h_p}}(\D^{n-1})$ and a contractive multiplier
$\varphi \in H^\infty_{\clb(\cld_{\h_p})}(\D)$ such that
\[
\Pi T_i^* = \left \{\begin{array}{ll}
M_{z_i}^* \Pi \quad \quad \;\;\;\;\text{if~} 1\le i< p,\\
M_{\Phi_{p}}^* \Pi \quad \quad \;\;\;\text{if~} i = p,\\
M_{z_{i-1}}^* \Pi \quad \quad \;\text{if~} p < i\le n,
\end{array}
\right.
\]
where $\Phi_p(\z)= \varphi(z_{q-1})$ for all $\z\in\D^{n-1}$.
\end{thm}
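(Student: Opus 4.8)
The plan is to transcribe the proof of Theorem~\ref{dilation1} essentially word for word, modifying only the single place where finite rank was used: the extension of the natural isometry to a unitary. First I would dispose of the directions $i \neq p$, which are insensitive to the rank. Since $T \in \T^n_{p,q}(\clh)$ forces $\h_p \in \mathbb{S}_{n-1}(\clh)$ to be a pure tuple, Theorem~\ref{th-dc dil} applies to $\h_p$ and produces the canonical isometry $\Pi : \Hil \raro H^2_{\cld_{\h_p}}(\D^{n-1})$, $(\Pi h)(\z) = \sum_{\bm{k}} \z^{\bm{k}} \Dtp \h_p^{*\bm{k}} h$, together with $\Pi T_i^* = M_{z_i}^* \Pi$ for $1 \le i < p$ and $\Pi T_i^* = M_{z_{i-1}}^* \Pi$ for $p < i \le n$. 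Nothing here changes from the finite rank case, so the entire task reduces to producing a contractive multiplier $\varphi$ with $\Pi T_p^* = M_{\Phi_p}^* \Pi$, where $\Phi_p(\z) = \varphi(z_{q-1})$.

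Lemma~\ref{lem-defect} continues to hold for every $T \in \T^n_{p,q}(\clh)$, and it shows, with no dimension hypothesis, that
\[
U_0(\Dtp h,\, \Dtq T_q^* h) = (\Dtp T_p^* h,\, \Dtq h), \qquad h \in \Hil,
\]
is a well-defined surjective isometry from $\clm := \{\Dtp h \oplus \Dtq T_q^* h : h \in \Hil\}$ onto $\cln := \{\Dtp T_p^* h \oplus \Dtq h : h \in \Hil\}$, both viewed inside $\cld_{\h_p} \oplus \cld_{\h_q}$. In Theorem~\ref{dilation1} the space $\cld_{\h_p} \oplus \cld_{\h_q}$ was finite-dimensional, so $\dim \clm^\perp = \dim \cln^\perp$ was automatic and $U_0$ extended to a unitary. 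The main obstacle now is precisely that this equality of defect dimensions is no longer free once $\cld_{\h_p}$ or $\cld_{\h_q}$ is infinite-dimensional; the orthogonal complements of $\clm$ and $\cln$ need not match.

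To repair this I would introduce an auxiliary Hilbert space $\cld$, enlarge the internal space to $\cld_{\h_q} \oplus \cld$, and embed $U_0$ as the isometry $(\Dtp h,\, \Dtq T_q^* h,\, 0_{\cld}) \mapsto (\Dtp T_p^* h,\, \Dtq h,\, 0_{\cld})$. Its domain and range are $\clm \oplus \{0_{\cld}\}$ and $\cln \oplus \{0_{\cld}\}$, whose orthogonal complements in $\cld_{\h_p} \oplus (\cld_{\h_q} \oplus \cld)$ are $\clm^\perp \oplus \cld$ and $\cln^\perp \oplus \cld$. Choosing $\cld$ infinite-dimensional and at least as large as both $\dim \clm^\perp$ and $\dim \cln^\perp$ (e.g.\ $\dim \cld = \dim(\cld_{\h_p} \oplus \cld_{\h_q}) + \aleph_0$), infinite-cardinal arithmetic gives $\dim(\clm^\perp \oplus \cld) = \dim \cld = \dim(\cln^\perp \oplus \cld)$, so a unitary between the complements exists and $U_0$ extends to a unitary $U = \left[\begin{smallmatrix} A & B \\ C & D\end{smallmatrix}\right]$ on $\cld_{\h_p} \oplus (\cld_{\h_q} \oplus \cld)$. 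Reading off the two block relations and running the same recursive substitution as in Lemma~\ref{unitary1}—using purity of $\h_p$, which gives $T_q^{*m} h \raro 0$, together with $\|D\| \le 1$ to kill the tail—yields $\Dtp T_p^* = A\Dtp + \sum_{i=0}^{\infty} B D^i C \Dtp T_q^{*(i+1)}$ verbatim.

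Finally I would set $\varphi = \tau_{U^*}$, the transfer function of $U^*$; this is $\clb(\cld_{\h_p})$-valued because the external (input/output) space of the colligation is still $\cld_{\h_p}$ and only the internal state space has been enlarged to $\cld_{\h_q} \oplus \cld$. By the identity \eqref{identity}, $\varphi$ is a contractive multiplier on $\D$. The one substantive difference from Theorem~\ref{dilation1} appears exactly here: since the state space is now infinite-dimensional, I cannot force the right-hand side of \eqref{identity} to vanish as $|z| \raro 1$, so $\varphi$ is only contractive rather than inner. With $\Phi_p(\z) = \varphi(z_{q-1})$, the inner-product computation of Theorem~\ref{dilation1} carries over unchanged and gives $\Pi T_p^* = M_{\Phi_p}^* \Pi$, completing the proof. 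I expect the padding argument of the previous paragraph to be the only genuinely new ingredient; every other step is a direct transcription of the finite rank proof.
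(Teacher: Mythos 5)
Your proposal is correct and follows essentially the same route as the paper: keep $\cld_{\h_p}$ as the external space, pad the internal space to $\cld_{\h_q}\oplus\cld$ with $\cld$ infinite-dimensional so that the isometry of Lemma \ref{unitary1} extends to a unitary, rerun the recursion (purity of $\h_p$ gives $T_q^{*m}h\to 0$), and take $\varphi=\tau_{U^*}$, now only a contractive multiplier. The explicit cardinality argument you give for matching the defect complements is the one detail the paper leaves implicit.
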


In Theorem \ref{without finite} we present a sharper version of the
above theorem: Every $n$-tuple in $\T^n_{p,q}(\clh)$ has an
isometric dilation. This will require a completely different method.
As is discussed in the following sections, the finite rank
assumption in Theorem \ref{dilation1} will turn out to be useful for
refined von Neumann inequality.

\section{von Neumann inequality for finite rank tuples in $\T^n_{p,q}(\clh)$}

In this section, we use the isometric dilations developed above to
prove the von Neumann inequality for finite rank tuples in
$\T^n_{p,q}(\clh)$.

First we recall the notion of completely non-unitary contractions
\cite{NF}. A contraction $T$ on $\clh$ is said to be
\textit{completely non-unitary} if there is no non-zero closed
reducing subspace $\cls \subseteq \clh$ for $T$ such that
$T|_{\cls}$ is a unitary operator. This notion has proved to be
useful in the following sense: If $T \in \T^1(\clh)$, then there
exists a unique decomposition $\clh = \clh_u \oplus \clh_c$ of
$\clh$ reducing $T$, such that $T|_{\clh_u}$ is unitary and
$T|_{\clh_c}$ is completely non-unitary. We therefore have the
\textit{canonical decomposition} of $T$ as:
\[
T = \begin{bmatrix} T|_{\clh_u} & 0 \\ 0 &
T|_{\clh_c}\end{bmatrix}.
\]
We need first the following result noted in ~\cite[Proposition
4.2]{DS}.

\begin{propn}\label{cd-U}
Let $U = \begin{bmatrix} A& B\\C & D\end{bmatrix}$ be a unitary
matrix on $\clh \oplus \clk$ and let $A =
\begin{bmatrix} A_u&0\\0&A_c\end{bmatrix} \in \clb(\clh_{u} \oplus \clh_{c})$ be
the canonical decomposition of $A$ into the unitary part $A_u$ on
$\clh_u$ and the completely non-unitary part
$A_c$ on $\clh_c$. Then $U'= \begin{bmatrix} A_c& B \\
C|_{\clh_c} & D
\end{bmatrix}$ is a unitary operator on $\clh_c \oplus \clk$ and
\[
\tau_U(z) =
\begin{bmatrix}
A_u & 0 \\ 0 & \tau_{U'}(z) \end{bmatrix} \in \clb(\clh_u \oplus
\clh_c) \quad \quad (z \in \mathbb{D}).
\]
\end{propn}

Now we turn to the distinguished varieties in $\D^2$ \cite{AM1}.
Recall that a non-empty set $V$ in $\mathbb{C}^2$ is a
\textit{distinguished variety} if there is a polynomial $p \in
\mathbb{C}[z_1, z_2]$ such that
\[V = \{(z_1, z_2) \in \mathbb{D}^2 : p(z_1, z_2) = 0\},\]and
$V$ exits the bidisc through the distinguished boundary, that is,
\[
\overline{V} \cap \partial \mathbb{D}^2 = \overline{V} \cap
(\partial \mathbb{D} \times \partial \mathbb{D}).
\]
Here $\partial \mathbb{D}^2$ and $\partial\mathbb{D}\times
\partial\mathbb{D}$ denote the boundary and the distinguished
boundary of the bidisc respectively, and $\overline{V}$ is the
closure of $V$ in $\overline{\mathbb{D}^2}$. We denote by $\partial
V$ the set $\overline{V} \cap \partial \mathbb{D}^2$, the boundary
of $V$ within the zero set of the polynomial $p$ and
$\overline{\mathbb{D}^2}$.

In the seminal paper \cite{AM1}, Agler and McCarthy characterized
distinguished varieties as follows: Let $V \subseteq \mathbb{C}^2$.
Then $V$ is a distinguished variety if and only if there exists a
rational matrix inner function $\Psi \in
H^\infty_{\clb(\mathbb{C}^m)}(\D)$, for some $m \geq 1$, such that
\[
V = \{(z_1, z_2) \in \mathbb{D}^2: \det (\Psi(z_1) - z_2
I_{\mathbb{C}^m}) = 0\}.
\]

In the following result, we
restrict ourselves to the case $\T^n_{1,2}(\clh)$, for simplicity.
The general case can be dealt with using a similar argument.

\begin{thm}\label{vN1}
If $T \in \T^n_{1,2}(\clh)$ is a finite rank operator, then there
exists an algebraic variety $V$ in $\overline{\D}^n$ such that for
all $p\in \Comp[z_1,\dots,z_n]$, the following holds:
\[
\|p(T)\|\le \sup_{\z\in V}|p(\z)|.
\]
If, in addition, $T_1$ is a pure contraction, then there exists a
distinguished variety $V'$ in $\D^2$ such that
\[
V = V' \times\D^{n-2} \subseteq \D^n.
\]
\end{thm}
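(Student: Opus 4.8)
The plan is to feed the explicit finite-rank dilation of Theorem~\ref{dilation1} into a boundary (maximum-modulus) argument and then read off a defining polynomial of the variety from the spectrum of the inner multiplier $\varphi$. Fixing $p=1$ and $q=2$ and applying Theorem~\ref{dilation1}, I obtain the canonical isometry $\Pi:\Hil\raro H^2_{\cld_{\h_1}}(\D^{n-1})$ and a finite-dimensional, hence rational, inner function $\varphi\in H^\infty_{\clb(\cld_{\h_1})}(\D)$ so that $T$ dilates to the commuting isometric $n$-tuple $(M_{\Phi_1},M_{z_1},\dots,M_{z_{n-1}})$ on $H^2_{\cld_{\h_1}}(\D^{n-1})$, where $\Phi_1(\w)=\varphi(w_1)$. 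Since $\Pi$ is isometric and $\Pi T^{*\bm{k}}=V^{*\bm{k}}\Pi$ for all $\bm{k}$ (Subsection 2.1), one has $T^{\bm{k}}=\Pi^*V^{\bm{k}}\Pi$ and hence $p(T)=\Pi^*\,p(M_{\Phi_1},M_{z_1},\dots,M_{z_{n-1}})\,\Pi$ for every $p\in\Comp[z_1,\dots,z_n]$. As the dilating tuple consists of commuting multiplication operators, $p(M_{\Phi_1},M_{z_1},\dots,M_{z_{n-1}})=M_G$ is multiplication by the $\clb(\cld_{\h_1})$-valued analytic function
\[
G(\w)=p\big(\varphi(w_1),w_1,w_2,\dots,w_{n-1}\big)\qquad(\w\in\D^{n-1}),
\]
so that $\|p(T)\|\le\|M_G\|=\|G\|_{H^\infty_{\clb(\cld_{\h_1})}(\D^{n-1})}$.

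Next I would pass to the distinguished boundary. Since $\varphi$ is a finite rational inner function, $G$ has radial boundary values almost everywhere on the torus $(\partial\D)^{n-1}$, and $\|G\|_{H^\infty}$ equals the essential supremum of $\|G(\w)\|$ over $\w\in(\partial\D)^{n-1}$. For such $\w$ we have $|w_1|=1$, so $\varphi(w_1)$ is unitary and therefore normal; as $G(\w)$ is a polynomial in the single normal matrix $\varphi(w_1)$ with scalar coefficients, the spectral theorem gives
\[
\|G(\w)\|=\max_{\lambda\in\sigma(\varphi(w_1))}\big|p(\lambda,w_1,w_2,\dots,w_{n-1})\big|.
\]
Each such $\lambda$ satisfies $\det\big(\lambda I_{\cld_{\h_1}}-\varphi(w_1)\big)=0$; clearing denominators in $\det\big(z_1 I_{\cld_{\h_1}}-\varphi(z_2)\big)$ produces a polynomial $P\in\Comp[z_1,z_2]$, and I set $V=\{\z\in\overline{\D}^n:P(z_1,z_2)=0\}$. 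Every point $(\lambda,w_1,w_2,\dots,w_{n-1})$ occurring above then lies in $V$, so combining the two displays yields, for all $p$,
\[
\|p(T)\|\le\sup_{\w\in(\partial\D)^{n-1}}\|G(\w)\|\le\sup_{\z\in V}|p(\z)|,
\]
which is the first assertion.

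For the refinement, suppose in addition that $T_1$ is pure; the goal is to show $\varphi$ carries no constant unitary direction, so that the slice $V'=\{(z_1,z_2)\in\D^2:P(z_1,z_2)=0\}$ genuinely lives in the open bidisc. Writing $\varphi=\tau_{U^*}$ for the unitary $U=\begin{bmatrix}A&B\\C&D\end{bmatrix}$ of Lemma~\ref{unitary1}, I would apply Proposition~\ref{cd-U} to $U^*$, whose $(1,1)$ block is $A^*$: a nontrivial unitary part of $A^*$ would produce a constant unitary block in $\tau_{U^*}=\varphi$, and unravelling the defining identity $U(D_{\h_1}h,D_{\h_2}T_2^*h)=(D_{\h_1}T_1^*h,D_{\h_2}h)$ would then exhibit a reducing subspace on which $T_1$ restricts to a unitary, which purity of $T_1$ forbids. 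With the unitary part absent, $\varphi$ is a rational matrix inner function with $\sigma(\varphi(z_2))\subseteq\D$ for every $z_2\in\D$, so by the Agler--McCarthy characterization~\cite{AM1} (with the roles of the two coordinates interchanged) the set $V'$ is a distinguished variety in $\D^2$. Because the eigenvalues of $\varphi(z_2)$ stay inside $\D$ for $z_2\in\D$, every boundary point $(\lambda,w_1)$ used in the second paragraph lies in $\overline{V'}$, so the points $(\lambda,w_1,w_2,\dots,w_{n-1})$ lie in $\overline{V'}\times\overline{\D}^{n-2}=\overline{V'\times\D^{n-2}}$; as $p$ is continuous, $\sup_{\z\in V'\times\D^{n-2}}|p(\z)|$ equals the supremum over this closure and hence still dominates $\|p(T)\|$. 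Therefore $V=V'\times\D^{n-2}\subseteq\D^n$ does the job.

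I expect the heart of the matter to be the third paragraph: rigorously linking purity of $T_1$ to the vanishing of the constant unitary block of $\varphi$ through Proposition~\ref{cd-U}, and verifying that, once this block is removed, $\sigma(\varphi(z_2))\subseteq\D$ for $z_2\in\D$, which is exactly what forces $V'$ into the open bidisc and makes it exit through the distinguished boundary. By contrast, the maximum-modulus reduction and the spectral-theorem identity are routine, and the remaining delicate point --- keeping track of the passage from the open polydisc to $(\partial\D)^{n-1}$ and to the closure defining $V$ --- is bookkeeping rather than a genuine obstacle.
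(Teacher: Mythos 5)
The first half of your argument --- dilation via Theorem~\ref{dilation1}, the reduction $\|p(T)\|\le\|M_G\|$, passage to the distinguished boundary, the spectral theorem for the unitary matrix $\varphi(w_1)$, and reading the variety off $\det\bigl(z_1I-\varphi(z_2)\bigr)$ --- is correct and is essentially the paper's argument; the paper merely performs the canonical unitary/completely-non-unitary splitting of $A^*$ (Proposition~\ref{cd-U}) \emph{before} this computation, writing $V=V_u\cup V_c$ with $V_u$ coming from the constant unitary block $\Phi_u\equiv A_u^*$ and $V_c$ from the c.n.u.\ block, whereas you defer the splitting to the second part. That reorganization is harmless.

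The gap is in your third paragraph, exactly where you predicted the heart of the matter would be, and it is twofold. First, the mechanism you propose --- ``unravelling $U(\Dtp h,\Dtq T_2^*h)=(\Dtp T_1^*h,\Dtq h)$ would exhibit a reducing subspace on which $T_1$ restricts to a unitary'' --- is a strictly stronger claim than what is true or needed. A nontrivial unitary part $\cld_u$ of $A^*$ does not yield a unitary direct summand of $T_1$: failure of purity is much weaker than having a unitary summand (a co-isometry such as $M_z^*$ on $H^2(\D)$ is not pure yet is completely non-unitary), and there is no evident way to manufacture a reducing subspace of $\clh$ from $\cld_u$. What one can show is only that $\cld_u\ne\{0\}$ forces some $f\in\clh$ with $\|T_1^{*m}f\|$ bounded below. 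Second, even that weaker implication requires the minimality of the canonical dilation, $\bigvee_{\bm{k}\in\Z^{n-1}}M_{\z}^{\bm{k}}\clq=H^2_{\cld_{\h_1}}(\D^{n-1})$ with $\clq=\mathrm{ran}\,\Pi$, which you never invoke: without it $\clq$ could lie entirely inside $H^2_{\cld_c}(\D^{n-1})$ while $\cld_u\ne\{0\}$, and $T_1$ would remain pure. The paper's proof supplies both ingredients: for $g\in H^2_{\cld_u}(\D^{n-1})$ and $f\in\clq$ one writes $g=M_{\Phi_u}^{m}g_m$ with $g_m=A_u^{*m}g$ and $\|g_m\|=\|g\|$ (since $M_{\Phi_u}=I\otimes A_u^*$ is unitary), so $|\langle g,f\rangle|\le\|g\|\,\|T_1^{*m}f\|\to0$ by purity, whence $\clq\subseteq H^2_{\cld_c}(\D^{n-1})$; since $H^2_{\cld_c}(\D^{n-1})$ reduces every $M_{z_i}$, minimality forces $H^2_{\cld_u}(\D^{n-1})=\{0\}$, i.e.\ $\cld_u=\{0\}$. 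Once this step is replaced, the remainder of your paragraph (Agler--McCarthy applied to the c.n.u.\ transfer function, continuity to pass to the closure) goes through as in the paper.
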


\begin{proof}
Let $(M_{\Phi_1}, M_{z_1},\dots,M_{z_{n-1}})$ on
$H^2_{\cld_{\h_1}}(\D^{n-1})$ be the isometric dilation of $T$
provided by Theorem~\ref{dilation1}, where $\Phi_1\in
H^{\infty}_{\mathcal{B}(\cld_{\h_1})}(\D)$ is the inner multiplier
given by
\[
\Phi_1 = \tau_{U^*},
\]
and
\[
U^*=\begin{bmatrix}A^*& C^*\\B^*& D^*\end{bmatrix} \in
\mathcal{B}(\cld_{\h_1} \oplus \cld_{\h_2}),
\]
is the unitary as in Lemma \ref{unitary1}. Let
\[
A^*=\begin{bmatrix} A_u^*& 0\\0& A_c^*\end{bmatrix} \in
\mathcal{B}(\cld_u \oplus \cld_c),
\]
be the canonical decomposition of $A^*$ on
\[
\cld_{\h_1} = \cld_u \oplus \cld_c,
\]
into the unitary part $A_u^*$ on $\cld_u$ and the completely
non-unitary part $A_c^*$ on $\cld_c$. If we set
\[
U^*_c= \begin{bmatrix} A_c^* & C^* \\
B^*|_{\cld_c} & D^*
\end{bmatrix} \in \clb(\cld_c \oplus \cld_{\h_2}),
\]
then Proposition \ref{cd-U} implies that
\[
\Phi_1(z) = \begin{bmatrix}\Phi_u(z)&0\\0&\Phi_c(z)\end{bmatrix},
\]
where
\[
\Phi_u(z) \equiv A_u^*,
\]
and
\[
\Phi_c(z) = \tau_{U^*_c},
\]
for all $z\in\D$. Let
\[
V_u =\{(z,w)\in\bar{\D}^2: \det(zI_{\cld_u} - \Phi_u(w))=0\}\times
\D^{n-2}
\]
and
\[
V_c = \{(z,w)\in\D^2:  \det(zI_{\cld_c} - \Phi_c(w)) = 0\} \times
\D^{n-2}.
\]
Since $\Phi_c \in H^\infty_{\clb(\cld_c)}(\D)$ is a rational matrix
inner function (cf. page 138, \cite{AM1}), the characterization
result of distinguished varieties by Agler and McCarthy (Theorem
1.12 in \cite{AM1}) implies that
\[
\{(z,w)\in\D^2:  \det(zI_{\cld_c} - \Phi_c(w)) = 0\},
\]
is a distinguished variety in $\D^2$. Now let $p \in \Comp[z_1,
\dots, z_n]$. Since the discussion following Theorem~\ref{dilation1}
implies that $T$ on $\clh$ and
\[
(P_{\clq} M_{\Phi_1}|_{\clq}, P_{\clq} M_{z_1}|_{\clq}, \ldots,
P_{\clq} M_{z_{n-1}}|_{\clq}),
\]
on
\begin{equation}\label{eq-QranPi}
\clq = \text{ran~} \Pi \subseteq H^2_{\cld_{\h_1}}(\D^{n-1}),
\end{equation}
are unitarily equivalent, it follows that
\[
\|p(T)\|_{\clb(\clh)} = \|P_{\clq} p(M_{\Phi_1}, M_{z_1}, \ldots,
M_{z_{n-1}})|_{\clq}\|_{\clb(\clq)},
\]
and so
\[
\|p(T)\|_{\clb(\clh)} \leq \|p(M_{\Phi_1}, M_{z_1}, \ldots,
M_{z_{n-1}})\|_{\clb(H^2_{\cld_{\h_1}}(\D^{n-1}))}.
\]
But
\[
\|p(M_{\Phi_1}, M_{z_1}, \ldots,
M_{z_{n-1}})\|_{\clb(H^2_{\cld_{\h_1}}(\D^{n-1}))} =
\|M_{p(\Phi_1(z_{1}), z_1 I_{\cld_{\h_1}}, \ldots, z_{n-1}
I_{\cld_{\h_1}})}\|_{\clb(H^2_{\cld_{\h_1}}(\D^{n-1}))},
\]
and
\[
\|M_{p(\Phi_1(z_{1}), z_1 I_{\cld_{\h_1}}, \ldots, z_{n-1}
I_{\cld_{\h_1}})}\|_{\clb(H^2_{\cld_{\h_1}}(\D^{n-1}))} \leq
\|{p(\Phi_1 (z_1), z_1 I_{\cld_{\h_1}},
\ldots, z_{n-1}
I_{\cld_{\h_1}})}\|_{H^{\infty}_{\clb(\cld_{\h_1})}(\D^{n-1})}.
\]
Clearly, the right side is equal to
\[
\sup_{\theta_1,\dots,\theta_{n-1}} \|{p(\Phi_1 (e^{i \theta_1}),
e^{i \theta_1} I_{\cld_{\h_1}}, \ldots, e^{i \theta_{n-1}}
I_{\cld_{\h_1}})}\|_{\clb(\cld_{\h_1})}.
\]
Hence we have
\[
\|p(T)\|_{\clb(\clh)} \leq \sup_{\theta_1,\dots,\theta_{n-1}}
\|{p(\Phi_1 (e^{i \theta_1}), e^{i \theta_1} I_{\cld_{\h_1}},
\ldots, e^{i \theta_{n-1}} I_{\cld_{\h_1}})}\|_{\clb(\cld_{\h_1})}.
\]
Now for each $\theta_1,\dots,\theta_{n-1}$, the orthogonal
decomposition of
\[
\Phi_1(e^{i\theta_{1}}) = \Phi_u(e^{i\theta_{1}}) \oplus
\Phi_c(e^{i\theta_{1}}),
\]
on $\cld_{\h_{1}} = \cld_u \oplus \cld_c$ applied to
\[
p(\Phi_1 (e^{i \theta_1}), e^{i \theta_1} I_{\cld_{\h_1}}, \ldots,
e^{i \theta_{n-1}} I_{\cld_{\h_1}}) \in \clb(\cld_{\h_1}),
\]
shows that
\[
\begin{split}
\|{p(\Phi_1 (e^{i \theta_1}), e^{i \theta_1} I_{\cld_{\h_1}},
\ldots, e^{i \theta_{n-1}} I_{\cld_{\h_1}})}\|_{\clb(\cld_{\h_1})} &
= \max \{ \|{p(\Phi_u (e^{i \theta_1}), e^{i \theta_1} I_{\cld_u},
\ldots, e^{i \theta_{n-1}} I_{\cld_u})}\|_{\clb(\cld_u)},
\\
& \phantom{max.....} \|{p(\Phi_c (e^{i \theta_1}), e^{i \theta_1}
I_{\cld_c}, \ldots, e^{i \theta_{n-1}}
I_{\cld_c})}\|_{\clb(\cld_c)}\}.
\end{split}
\]
Note further that
\[
 \|{p(\Phi_s (e^{i \theta_1}), e^{i \theta_1} I_{\cld_s},
\ldots, e^{i \theta_{n-1}} I_{\cld_s})}\|_{\clb(\cld_s)} =
\mathop{\sup}_{\lambda \in \sigma(\Phi_s(e^{i\theta_{1}}))}
|p(\lambda, e^{i \theta_1}, \ldots, e^{i \theta_{n-1}})|,
\]
for all $s = u, c$, and hence
\[
\|{p(\Phi_1 (e^{i \theta_1}), e^{i \theta_1} I_{\cld_{\h_1}},
\ldots, e^{i \theta_{n-1}} I_{\cld_{\h_1}})}\|_{\clb(\cld_{\h_1})}
\leq \mathop{\sup}_{\lambda \in \sigma(\Phi_u(e^{i\theta_{1}})) \cup
\sigma(\Phi_c(e^{i\theta_{1}}))} |p(\lambda, e^{i \theta_1}, \ldots,
e^{i \theta_{n-1}})|.
\]
Consequently we have
\[
\begin{split}
\|p(T)\|_{\clb(\clh)} \leq & \sup_{\theta_1, \ldots, \theta_{n-1}}
\{|p(\lambda, e^{i \theta_1}, \ldots, e^{i \theta_{n-1}})| : \lambda
\in \sigma(\Phi_u(e^{i\theta_{1}})) \cup
\sigma(\Phi_c(e^{i\theta_{1}}))\}
\\
& = \sup_{\z \in \partial V_u \cup \partial V_c} |p(\z)|,
\end{split}
\]
and hence, by continuity
\[
\|p(T)\|_{\clb(\clh)}  \leq \|p\|_{V},
\]
where
\[
V = V_u \cup V_c.
\]
This proves the first part of the theorem. Assume now that $T_1$ is
a pure contraction. It is enough to prove that $V_u$ is an empty
set. Since $\Phi_u(z) \equiv A_u^*$, $z \in \D$, and $A_u$ is a
unitary on $\cld_u$, this is equivalent to proving that
\[
\cld_u = \{0\},
\]
which is further equivalent to the condition that $A^*$ is
completely non-unitary. First, we observe that (see
\eqref{eq-QranPi})
\[
\clq \subseteq H^2_{\cld_c}(\D^{n-1}).
\]
Indeed, let $g\in H^2_{\cld_u}(\D^{n-1})$, $m \in \Z$ and set $g_m =
M_{\Phi_u}^{*m} g$. Then
\[
g_m = A_u^{* m} g \in H^2_{\cld_u}(\D^{n-1}),
\]
and
\[
M_{\Phi_u}^{m} g_m = g.
\]
Now, if $f\in\clq$, then clearly
\[
\begin{split}
\langle g,f\rangle & = \langle M_{\Phi_u}^{m}g_m, f\rangle
\\
& = \langle g_m , T_1^{* m} f \rangle,
\end{split}
\]
and so
\[
\begin{split}
|\langle g,f\rangle | & \leq \|g_m\| \|T_1^{* m}f\|
\\
& = \|g\| \|T_1^{* m}f\|.
\end{split}
\]
Since $T_1$ is a pure contraction, it follows that
\[
\langle g,f\rangle = 0,
\]
and therefore $\clq\subseteq H^2_{\cld_c}(\D^{n-1})$. Finally, on
the one hand, by the property of the isometric dilation we have
\[
\bigvee_{\bm{k} \in \Z^{n-1}} M_{\z}^{\bm{k}}\clq =
H^2_{\cld_{\h_1}}(\D^{n-1}),
\]
and on the other hand $H^2_{\cld_c}(\D^{n-1}) \subseteq
H^2_{\cld_{\h_1}}(\D^{n-1})$ is a joint reducing subspace for $(M_{z_1},
\ldots, M_{z_{n-1}})$. Hence $H^2_{\cld_u}(\D^{n-1})=\{0\}$ and
therefore $\cld_u=\{0\}$. The theorem is proved.
\end{proof}

As we have pointed out before, the above von Neumann inequality for
tuples in $\T^n_{p,q}(\clh)$ is finer and conceptually different
(under the finite rank assumption) from the one obtained by
Grinshpan, Kaliuzhnyi-Verbovetskyi, Vinnikov and Woerdeman
\cite{VV}.

\newsection{Dilations for tuples in $\T^n_{p,q}(\clh)$}\label{sec-TPQ}

In this section we again investigate dilations for tuples in
$\T^n_{p,q}(\clh)$. We prove the validity of Statements 1* and 2 for
tuples in $\T^n_{p,q}(\clh)$ without any additional assumption on
the defect spaces $\cld_{\h_p}$ and $\cld_{\h_q}$.

Recall that (see Equations \eqref{eq-Tn-2} and \eqref{eq-Tn-1}) for
$T \in \T^n(\clh)$, we denote
\[
\hat{T}_{p,q} = (T_1,\dots,T_{p-1},
T_{p+1},\dots,T_{q-1},T_{q+1},\dots,T_n) \in \T^{(n-2)}(\clh),
\]
the $(n-2)$-tuple obtained from $T$ by deleting $T_p$ and $T_q$, and
\[
\hat{T}_{pq} = (T_1,\dots,T_{p-1}, \underbrace{T_pT_q}_{p^{th}
~place},T_{p+1},\dots,T_{q-1},T_{q+1},\dots,T_n) \in
\T^{(n-1)}(\clh),
\]
the $(n-1)$-tuple obtained from $T$ by removing $T_q$ and replacing
$T_p$ by the product $T_pT_q$. We begin with a simple but important
observation.

\begin{lemma}\label{product is szego}
If $T \in \T_{p,q}^n(\clh)$, then $\h_{pq}$ is a pure tuple and
$\h_{pq} \in \mathbb{S}_{n-1}(\clh)$.
\end{lemma}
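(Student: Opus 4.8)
The statement bundles two independent assertions: that $\h_{pq}$ is a pure tuple, and that $\h_{pq} \in \mathbb{S}_{n-1}(\clh)$, i.e.\ $\mathbb{S}_{n-1}^{-1}(\h_{pq}, \h_{pq}^*) \geq 0$. My plan is to obtain the positivity assertion as an immediate consequence of the defect identity already recorded in Lemma~\ref{lem-defect}, and to obtain purity by a short factorization argument that uses commutativity. Neither part requires new machinery; the content is in seeing that the hypotheses defining $\T_{p,q}^n(\clh)$ feed exactly into these two facts.

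For the positivity, I would invoke Lemma~\ref{lem-defect}, which gives $D_{\h_{pq}}^2 = \Dtp^2 + T_q \Dtq^2 T_q^*$ (and, symmetrically, $= T_p \Dtp^2 T_p^* + \Dtq^2$). Since $D_{\h_{pq}}^2 = \mathbb{S}_{n-1}^{-1}(\h_{pq}, \h_{pq}^*)$ by definition, it suffices to check that the right-hand side is positive. But $T \in \T_{p,q}^n(\clh)$ forces $\h_p, \h_q \in \mathbb{S}_{n-1}(\clh)$, whence $\Dtp^2 = \mathbb{S}_{n-1}^{-1}(\h_p, \h_p^*) \geq 0$ and $\Dtq^2 = \mathbb{S}_{n-1}^{-1}(\h_q, \h_q^*) \geq 0$. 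Conjugating the latter by $T_q$ preserves positivity, and a sum of positive operators is positive, so $D_{\h_{pq}}^2 \geq 0$ and hence $\h_{pq} \in \mathbb{S}_{n-1}(\clh)$.

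For purity, I would first identify the entries of $\h_{pq}$: they are the operators $T_j$ with $j \notin \{p,q\}$ together with the product $T_p T_q$ placed in the $p$-th slot. Because $\h_p$ is pure by hypothesis, every $T_j$ with $j \neq p$ satisfies $\|T_j^{*m} h\| \raro 0$; this covers all the non-product entries $T_j$ ($j \neq p,q$) and also $T_q$ itself. It then remains only to treat the product entry, and here commutativity is decisive: $(T_p T_q)^{*m} = T_p^{*m} T_q^{*m}$, so $\|(T_p T_q)^{*m} h\| \leq \|T_p^{*m}\|\,\|T_q^{*m} h\| \leq \|T_q^{*m} h\| \raro 0$, using that $T_p$ is a contraction and $T_q$ is pure. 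Thus $\h_{pq}$ is pure.

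I do not anticipate a genuine obstacle; both halves reduce to facts established earlier. The only subtle points worth flagging are that the purely algebraic identity of Lemma~\ref{lem-defect} is precisely what converts the two separate positivity hypotheses on $\h_p$ and $\h_q$ into positivity for the combined tuple $\h_{pq}$, and that commutativity lets the adjoint of the product split so that purity of $T_q$ alone suffices --- note that purity of $T_p$ is \emph{not} assumed, and the argument correctly avoids needing it.
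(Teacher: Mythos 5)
Your proof is correct and follows essentially the same route as the paper: positivity of $\mathbb{S}_{n-1}^{-1}(\h_{pq},\h_{pq}^*)$ is read off from the identity of Lemma~\ref{lem-defect} together with $\h_p,\h_q\in\mathbb{S}_{n-1}(\clh)$, and purity of the product entry follows from commutativity and purity of $T_q$ (which is an entry of the pure tuple $\h_p$). Your explicit remark that purity of $T_p$ is never needed is a nice clarification of a point the paper leaves implicit.
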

\begin{proof}
Since $T_p T_q = T_q T_p$ and $T_q$ is a pure contraction, it follows that
$T_p T_q$ is a pure contraction, and hence $\h_{pq}$ is a pure tuple. On
the other hand, by (\ref{eq-defect}) we have
\[
D_{\h_{pq}}^2 = \Dtq^{2} + T_{p} \Dtp^2 T_{p}^*,
\]
and therefore,
\[
D_{\h_{pq}}^2 \geq 0,
\]
as $\h_p, \h_q \in \mathbb{S}_{n-1}(\clh)$. This completes the proof
of the lemma.
\end{proof}

Let $\cle_1$ and $\cle_2$ be two Hilbert spaces, and let
\[U =
\begin{bmatrix}
A&B\\C&0
\end{bmatrix},
\]
be a unitary operator on $\cle_1 \oplus \cle_2$. Then the
$\clb(\cle_1)$-valued transfer function $\tau_U$ on $\mathbb{D}$,
defined by (see Subsection \ref{sub-trans})
\[
\tau_U (z)= A + z B C,
\]
satisfies the equality (see \eqref{identity})
\[
I- \tau_U (z)^*\tau_U (z) = (1-|z|^2)C^*C,
\]
for all $z \in \D$. In particular, $\tau_U \in
H^\infty_{\clb(\cle_1)}(\D)$ is an inner function. Now if $1 \leq p
\leq n$ and
\[
\Phi(\z) = \tau_U(z_p),
\]
for all $\z \in \D^n$, then $\Phi \in H^\infty_{\clb(\cle_1)}(\D^n)$
is an inner polynomial in $z_p$ of degree at most 1. This point of
view will be used in what follows to develop the dilation theory for
tuples in $\T_{p,q}^n(\clh)$.

We now proceed to give an explicit description of isometric
dilations of tuples in $\T_{p,q}^n(\clh)$. Let $T \in
\T_{p,q}^n(\clh)$. Then, by the previous lemma, $\h_{pq} \in
\mathbb{S}_{n-1}(\clh) \cap \T^{(n-1)}(\clh)$ is a pure tuple. Let
\[
\cld_{\h_{pq}} = \overline{ran}\; \mathbb{S}_{n-1} (\h_{pq},
\h_{pq}^*),
\]
and let $\Pi_{pq} : \clh \raro H^2_{\cld_{\h_{pq}}}(\D^{n-1})$ be
the canonical isometry corresponding to $\h_{pq}$ (see Theorem
\ref{th-dc dil}). Then
\begin{equation}
\label{eq-Ri}
\Pi_{pq} R_i^* = (M_{z_i} \otimes I_{\cld_{\h_{pq}}})^* \Pi_{pq},
\end{equation}
for all $i = 1, \ldots, n-1$, where
\begin{equation*}
R_i = \left \{\begin{array}{ll}
T_i \quad \quad \quad \text{if~} 1\le i< q, i\neq p,\\
T_p T_q \quad \quad \text{if~} i = p,\\
T_{i+1} \quad \quad \text{if~} q \leq i\le n-1.
\end{array}
\right.
\end{equation*}
In other words, $(R_1, \ldots, R_{n-1}) = \hat{T}_{pq}$, that is
\[
(R_1, \ldots, R_{n-1}) = (T_1,\dots,T_{p-1},
\underbrace{T_pT_q}_{p^{th}
~place},T_{p+1},\dots,T_{q-1},T_{q+1},\dots,T_n),
\]
on $\clh$ dilates to
\[
(M_{z_1} \otimes I_{\cld_{\h_{pq}}}, \ldots, M_{z_{n-1}} \otimes
I_{\cld_{\h_{pq}}}),
\]
on $H^2_{\cld_{\h_{pq}}}(\D^{n-1})$ via the canonical isometry
$\Pi_{pq} : \clh \raro H^2_{\cld_{\h_{pq}}}(\D^{n-1})$. Now let
$\cle$ be a Hilbert space, and let $V : \cld_{\h_{pq}} \raro \cle$
be an isometry. Let
\begin{equation}\label{piv}
\Pi_{V, pq} = (I_{H^2(\D^{n-1})}\otimes V)\circ \Pi_{pq} \in
\clb(\clh, H^2_{\cle}(\D^{n-1})).
\end{equation}
Then $\Pi_{V,pq} : \clh \raro H^2_{\cle}(\D^{n-1})$ is an isometry
and
\begin{equation}\label{eq-piv}
\Pi_{V, pq} R_i^* = (M_{z_i}\otimes I_{\cle})^* \Pi_{V, pq},
\end{equation}
for all $i = 1, \ldots, n-1$. So $\h_{pq}$ on $\clh$ dilates to
$(M_{z_1} \otimes I_{\cle}, \ldots, M_{z_{n-1}} \otimes I_{\cle})$
on $H^2_{\cle}(\D^{n-1})$ via the isometry $\Pi_{V, pq}$.  Now we
are ready to prove the key lemma.

\begin{lemma}\label{dilating unitary}
Let $\clh$ and $\cle$ be Hilbert spaces, let $T \in
\T^n_{p,q}(\clh)$, and let $V$ and $\Pi_{V, pq}$ be as above. Let
$F_1$ and $F_2$ be bounded operators on $\Hil$, and let
$\clf_i=\overline{\text{ran}}\, F_i$, $i=1,2$. Let
\[
U_i = \begin{bmatrix}A_i&B_i\\C_i&0\end{bmatrix} :
\cle\oplus\clf_i\to\cle\oplus\clf_i,
\]
be a unitary operator, $i=1,2$. If
\[
U_1(V D_{\h_{pq}} h, F_1 T_p^*T_q^*h)=(V D_{\h_{pq}} T_p^*h, F_1h),
\]
and
\[
U_2(V D_{\h_{pq}} h, F_2 T_p^*T_q^*h)=(V D_{\h_{pq}} T_q^*h, F_2h),
\]
for all $h \in \clh$, then
\[
\Pi_{V, pq} T_p^*=M_{\Phi_1}^* \Pi_{V, pq},
\]
and
\[
\Pi_{V, pq} T_q^*=M_{\Phi_2}^* \Pi_{V, pq},
\]
where
\[
\Phi_i(\z)= A_i^* + z_p C_i^* B_i^* \quad \quad (\z\in \D^{n-1}),
\]
is the $\mathcal{B}(\cle)$-valued one variable transfer function of
$U_i^*$, $i = 1, 2$. In particular, ${\Phi_i} \in
H^\infty_{\clb(\cle)}(\D^{n-1})$, $i = 1, 2$, is an inner polynomial
in $z_p$ of degree at most 1.
\end{lemma}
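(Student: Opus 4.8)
The plan is to run the same coordinate computation as in the proof of Theorem~\ref{dilation1}, but now exploiting that each $U_i$ has vanishing lower-right block: this forces the relevant transfer function to be a polynomial of degree one, so no infinite series (as in Lemma~\ref{unitary1}) appears and the argument becomes series-free. First I would dispose of the inner-polynomial claim. Writing
\[
U_i^* = \begin{bmatrix} A_i^* & C_i^* \\ B_i^* & 0 \end{bmatrix},
\]
which again has a zero lower-right corner, the discussion immediately preceding the lemma (applied to $U_i^*$ in place of $U$) gives $\tau_{U_i^*}(z) = A_i^* + z\, C_i^* B_i^*$ and shows $\tau_{U_i^*}$ is inner. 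Hence $\Phi_i(\z) = \tau_{U_i^*}(z_p)$ is an inner polynomial in $z_p$ of degree at most $1$, settling the last assertion.

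Next I would extract the two key algebraic identities. Reading off the two components of the first defining relation of $U_1$, the bottom row yields $C_1 V D_{\h_{pq}} = F_1$, and substituting this into the top row gives the single identity
\[
V D_{\h_{pq}} T_p^* h = A_1 V D_{\h_{pq}} h + B_1 C_1 V D_{\h_{pq}} T_p^* T_q^* h \qquad (h \in \clh),
\]
with the symmetric identity for $U_2$, $T_q^*$, and subscript $2$. I would then test the desired intertwining on the spanning vectors $\z^{\bm{k}} \eta$, $\bm{k} \in \Z^{n-1}$, $\eta \in \cle$. Using the explicit form of $\Pi_{V, pq}$, the left side is $\langle \Pi_{V, pq} T_p^* h, \z^{\bm{k}} \eta \rangle = \langle V D_{\h_{pq}} \h_{pq}^{*\bm{k}} T_p^* h, \eta \rangle$. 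For the right side, since $\Phi_1(\z)\z^{\bm{k}} = \z^{\bm{k}} A_1^* + \z^{\bm{k}+\bm{e}_p} C_1^* B_1^*$ (with $\bm{e}_p$ the multi-index carrying $1$ in the $p$-th slot), and since $R_p = T_p T_q$ forces $\h_{pq}^{*(\bm{k}+\bm{e}_p)} = \h_{pq}^{*\bm{k}} T_p^* T_q^*$, expanding $\langle \Pi_{V,pq} h, \Phi_1(\z)\z^{\bm{k}}\eta\rangle$ gives $\langle A_1 V D_{\h_{pq}} \h_{pq}^{*\bm{k}} h + B_1 C_1 V D_{\h_{pq}} \h_{pq}^{*\bm{k}} T_p^* T_q^* h, \eta \rangle$. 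Finally I match the two sides by substituting $\h_{pq}^{*\bm{k}} h$ for $h$ in the key identity and using that $T_p, T_q$ commute with every component of $\h_{pq}$ (so $T_p^*$ and $T_p^* T_q^*$ commute with $\h_{pq}^{*\bm{k}}$) to pull $\h_{pq}^{*\bm{k}}$ through; the inner products then agree for all $\bm{k}, \eta$, yielding $\Pi_{V, pq} T_p^* = M_{\Phi_1}^* \Pi_{V, pq}$, and the same computation with subscript $2$ gives the $T_q^*$ statement.

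The one point requiring genuine care—the main obstacle, such as it is—is the bookkeeping around the identification $R_p = T_p T_q$: one must check that the single shift $z_p \mapsto \z^{\bm{e}_p}$ produced by the degree-one transfer function corresponds exactly to multiplication by $T_p^* T_q^*$ on the defect vectors, and that the commutativity juggling legitimately inserts $\h_{pq}^{*\bm{k}}$ into the key identity. Once the degree-one identity is secured, the remainder is a direct and series-free calculation, in contrast to the series argument needed in Lemma~\ref{unitary1}.
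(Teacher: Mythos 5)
Your proposal is correct and follows essentially the same route as the paper's proof: extract the identity $V D_{\h_{pq}} T_p^* = A_1 V D_{\h_{pq}} + B_1 C_1 V D_{\h_{pq}} T_p^* T_q^*$ from the two block rows of the unitary relation, then verify the adjoint intertwining by pairing against the monomials $\z^{\bm{k}}\otimes\eta$, where the single $z_p$-shift in $\Phi_1$ matches $\h_{pq}^{*(\bm{k}+\bm{e}_p)} = \h_{pq}^{*\bm{k}} T_p^* T_q^*$. The innerness claim via the zero lower-right block is likewise exactly how the paper disposes of the final assertion.
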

\begin{proof}
Because of the symmetric roles of $T_p$ and $T_q$, we only prove
that $\Pi_{V, pq} T_p^* = M_{\Phi_1}^* \Pi_{V, pq}$. Let $h\in
\Hil$, $\bm{k} \in \Z^{n-1}$ and let $\eta\in\cle$. Using the
definition of $\Pi_{pq}$, we have
\[
\begin{split}
\langle \Pi_{V, pq} T_p^*h, {\z}^{\bm{k}} \otimes \eta \rangle & =
\langle (I_{H^2(\D^{n-1})} \otimes V) \Pi_{pq} T_p^*h, {\z}^{\bm{k}}
\otimes \eta \rangle
\\
& = \langle (I_{H^2(\D^{n-1})} \otimes V) \sum_{\bm{l} \in \Z^{n-1}}
{\z}^{\bm{l}} \otimes D_{\h_{pq}} \h_{pq}^{* \bm{l}} T_p^* h,
{\z}^{\bm{k}} \otimes \eta \rangle
\\
& = \langle V D_{\h_{pq}} \h_{pq}^{* \bm{k}} T_p^* h, \eta\rangle.
\end{split}
\]
Also since
\[
U_1(V D_{\h_{pq}} h, F_1 T_p^*T_q^*h)=(V D_{\h_{pq}} T_p^*h, F_1h),
\]
for $h\in\Hil$, we find that
\[
V D_{\h_{pq}} T_p^* = A_1 V D_{\h_{pq}} + B_1 F_1 T_p^*T_q^*,
\]
and
\[
F_1 = C_1 V D_{\h_{pq}}.
\]
Putting this together yields
\[
V D_{\h_{pq}} T_p^* = A_1 V D_{\h_{pq}} + B_1 C_1 V D_{\h_{pq}}
T_p^*T_q^*,
\]
and so
\[
\begin{split}
\langle M_{\Phi_1}^*\Pi_{V, pq} h, {\z}^{\bm{k}} \otimes \eta \rangle
& = \langle \Pi_{V, pq} h, M_{\Phi_1}({\z}^{\bm{k}} \otimes \eta)
\rangle
\\
& = \langle (I_{H^2(\D^{n-1})} \otimes V) \sum_{\bm{l} \in \Z^{n-1}}
{\z}^{\bm{l}} \otimes D_{\h_{pq}} \h_{pq}^{* \bm{l}} h, (A_1^* +
z_{p} C_1^* B_1^*) ({\z}^{\bm{k}} \otimes \eta) \rangle
\\
& = \langle A_1 V D_{\h_{pq}} \h^{* \bm{k}} h, \eta \rangle +
\langle B_1 C_1 V D_{\h_{pq}} \h_{pq}^{* \bm{k}} T_p^* T_q^* h, \eta
\rangle
\\
&= \langle V D_{\h_{pq}} \h_{pq}^{* \bm{k}} T_p^* h, \eta\rangle,
\end{split}
\]
and thus $\Pi_{V, pq} T_p^* = M_{\Phi_1}^* \Pi_{V, pq}$ as required.
The final claim follows easily from the paragraph following Lemma
\ref{product is szego}. This completes the proof of the lemma.
\end{proof}

Now we are ready to prove the main dilation result of this section.

\begin{thm}\label{without finite}
Let $\clh$ be a Hilbert space, and let $T =
(T_1, \ldots, T_n) \in \T^n_{p,q}(\clh)$. Then there exist a Hilbert
space $\cle$ and an isometry $\Pi: \Hil\to H^2_{\cle}(\D^{n-1})$
such that
\[
\Pi T_i^* = \left \{\begin{array}{ll}
M_{z_i}^* \Pi \quad \quad \quad \text{if~} 1\le i< q, i\neq p,\\
M_{\Phi_i}^* \Pi \quad \quad \; \;\;\text{if~} i = p, q,\\
M_{z_{i-1}}^* \Pi \quad \quad \;\text{if~} q < i\le n,
\end{array}
\right.
\]
where $\Phi_p$ and $\Phi_q$ in $H^\infty_{\clb(\cle)}(\D^{n-1})$ are
inner polynomials in $z_p$ of degree at most one and
\[
\Phi_p(\z) \Phi_q(\z) = \Phi_q(\z) \Phi_p (\z) = z_p I_{\cle},
\]
for all $\z \in \D^{n-1}$. In particular, $(T_1, \ldots, T_n) \in
\T^n_{p,q}(\clh)$ dilates to the isometric tuple
\[
(M_{z_1}, \dots, M_{z_{p-1}}, M_{\Phi_p}, M_{z_{p+1}},\dots,
M_{z_{q-1} }, M_{\Phi_q}, M_{z_{q}},\dots, M_{z_{n-1}} ),
\]
on $H^2_{\cle}(\D^{n-1})$ via the isometry $\Pi : \clh \raro
H^2_{\cle}(\D^{n-1})$.
\end{thm}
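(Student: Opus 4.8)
The plan is to dilate the product tuple $\h_{pq}$ first, and then to split the single isometry coming from $T_pT_q$ into a \emph{commuting} product $M_{\Phi_p}M_{\Phi_q}$ using the defect identity of Lemma~\ref{lem-defect}. By Lemma~\ref{product is szego}, $\h_{pq}\in\mathbb{S}_{n-1}(\clh)$ is a pure tuple, so Theorem~\ref{th-dc dil} furnishes the canonical isometry $\Pi_{pq}:\clh\to H^2_{\cld_{\h_{pq}}}(\D^{n-1})$ intertwining the coordinates of $\h_{pq}$ with the shifts $M_{z_i}\otimes I$. Fixing an isometry $V:\cld_{\h_{pq}}\to\cle$ (with $\cle$ chosen in the next step) and setting $\Pi:=\Pi_{V,pq}$ as in \eqref{piv}, relation \eqref{eq-piv} gives $\Pi T_i^*=(M_{z_i}\otimes I_\cle)^*\Pi$ for all $i\ne p,q$; reading off the $p$-th entry $T_pT_q$ of $\h_{pq}$, it also yields $\Pi (T_pT_q)^*=(M_{z_p}\otimes I_\cle)^*\Pi$. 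Thus it remains only to produce inner functions $\Phi_p,\Phi_q$ with $\Pi T_p^*=M_{\Phi_p}^*\Pi$, $\Pi T_q^*=M_{\Phi_q}^*\Pi$ and $\Phi_p\Phi_q=\Phi_q\Phi_p=z_pI_\cle$.

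To manufacture $\Phi_p$ I would invoke Lemma~\ref{dilating unitary} with $F_1=\Dtq$. The two forms $D_{\h_{pq}}^2=\Dtp^2+T_q\Dtq^2T_q^*=\Dtq^2+T_p\Dtp^2T_p^*$ of Lemma~\ref{lem-defect} give, after a short computation, the norm identity $\|VD_{\h_{pq}}h\|^2+\|\Dtq T_p^*T_q^*h\|^2=\|VD_{\h_{pq}}T_p^*h\|^2+\|\Dtq h\|^2$, so the map $(VD_{\h_{pq}}h,\Dtq T_p^*T_q^*h)\mapsto(VD_{\h_{pq}}T_p^*h,\Dtq h)$ is isometric. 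Enlarging $\cle$ so that the orthogonal complements of its domain and range become infinite dimensional, this extends to a unitary $U_1$ on $\cle\oplus\cld_{\h_q}$, which can be taken of the colligation form $U_1=\begin{bmatrix}A_1&B_1\\ C_1&0\end{bmatrix}$ with vanishing feedthrough: since $D_{\h_{pq}}^2\ge\Dtq^2$, the output component $\Dtq h$ is a contraction of the input component $VD_{\h_{pq}}h$, i.e. $\Dtq=C_1VD_{\h_{pq}}$, which is exactly the statement that the lower-right block is $0$. Lemma~\ref{dilating unitary} then gives $\Pi T_p^*=M_{\Phi_p}^*\Pi$ with $\Phi_p(\z)=A_1^*+z_pC_1^*B_1^*$ an inner polynomial of degree at most one in $z_p$.

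The delicate point is the product relation $\Phi_p\Phi_q=z_pI_\cle$, which I would \emph{not} try to read off the intertwiners: $\mathrm{ran}\,\Pi$ is in general not cyclic in $H^2_\cle(\D^{n-1})$, and two analytic multipliers agreeing on a co-invariant subspace need not coincide. Instead I would arrange the completion of $U_1$ so that in addition $B_1^*VD_{\h_{pq}}=\Dtq T_q^*$; this is consistent because $\|\Dtq T_q^*h\|^2\le\|D_{\h_{pq}}h\|^2=\|VD_{\h_{pq}}h\|^2$ (again by Lemma~\ref{lem-defect}), so the seed map $VD_{\h_{pq}}h\mapsto\Dtq T_q^*h$ is a well-defined contraction, already compatible on $\mathrm{ran}(VD_{\h_{pq}}T_p^*)$ with the defining relation of $U_1$ through $A_1^*B_1=0$. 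Granting this, I would set $\Phi_q(\z):=B_1C_1+z_pA_1$. The unitarity relations $A_1^*B_1=0$, $B_1^*B_1=I$, $C_1C_1^*=I$, $A_1C_1^*=0$, $A_1A_1^*+B_1B_1^*=I$ and $A_1^*A_1+C_1^*C_1=I$ give at once that $\Phi_q$ is inner of degree at most one and that $\Phi_p\Phi_q=\Phi_q\Phi_p=z_pI_\cle$: the constant and $z_p^2$ coefficients vanish, and the $z_p$ coefficient collapses to $A_1^*A_1+C_1^*C_1=I$. Finally $\Pi T_q^*=M_{\Phi_q}^*\Pi$ follows from the same computation as in Lemma~\ref{dilating unitary}, which here reduces to the operator identity $C_1^*B_1^*VD_{\h_{pq}}+A_1^*VD_{\h_{pq}}T_p^*T_q^*=VD_{\h_{pq}}T_q^*$; using $C_1^*C_1=I-A_1^*A_1$ and $\Dtq=C_1VD_{\h_{pq}}$, this is in turn equivalent to the extra relation $B_1^*VD_{\h_{pq}}=\Dtq T_q^*$.

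Assembling, $(M_{z_1},\dots,M_{\Phi_p},\dots,M_{\Phi_q},\dots)$ is a commuting family of isometries on $H^2_\cle(\D^{n-1})$ (commutativity of $M_{\Phi_p}$ and $M_{\Phi_q}$ being precisely $\Phi_p\Phi_q=\Phi_q\Phi_p$), the single isometry $\Pi$ intertwines every $T_i^*$ with the stated adjoint, and $M_{\Phi_p}M_{\Phi_q}=M_{z_p}\otimes I_\cle$ is consistent with $\Pi(T_pT_q)^*=(M_{z_p}\otimes I_\cle)^*\Pi$ from the first step; hence $T$ dilates to the asserted tuple. The genuine obstacle I expect is the construction in the second and third steps: realizing $U_1$ as a unitary colligation with zero feedthrough that simultaneously carries the defining relation and the extra relation $B_1^*VD_{\h_{pq}}=\Dtq T_q^*$ on the common enlarged space $\cle$ --- equivalently, in the symmetric formulation, coordinating the $p$- and $q$-colligations $U_1,U_2$ of Lemma~\ref{dilating unitary} so that $A_2^*=B_1C_1$ and $C_2^*B_2^*=A_1$.
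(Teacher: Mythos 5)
Your overall strategy is the same as the paper's: dilate the product tuple $\h_{pq}$ via the canonical isometry, pass to $\Pi_{V,pq}$, and then split $M_{z_p}\otimes I_{\cle}$ into a commuting product of two degree-one inner polynomials by feeding unitary colligations with zero feedthrough into Lemma~\ref{dilating unitary}. Your preparatory algebra is correct: the norm identity, the relations $\Dtq = C_1VD_{\h_{pq}}$ and $B_1^*VD_{\h_{pq}}=\Dtq T_q^*$, the formula $\Phi_q=B_1C_1+z_pA_1$, and the block-unitarity verification of $\Phi_p\Phi_q=\Phi_q\Phi_p=z_pI_{\cle}$ are all consistent with (and in fact satisfied by) the paper's construction. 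But the step you defer --- actually producing the unitaries $U_1$ and $U_2$ --- is the substance of the proof, and it is missing. Observing that the seed map is isometric and that each desired block identity is individually ``consistent'' does not produce a simultaneous unitary completion of the prescribed form: a unitary $\begin{bmatrix}A_1&B_1\\C_1&0\end{bmatrix}$ forces $B_1$ and $C_1^*$ to be isometries with $B_1B_1^*=I-A_1A_1^*$ and $C_1^*C_1=I-A_1^*A_1$, and these structural constraints must be reconciled with the two operator relations you impose on $\Ran(VD_{\h_{pq}})$ at the same time. You correctly name this as ``the genuine obstacle,'' but a proof must overcome it rather than grant it.

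The paper resolves the obstacle by an explicit choice that makes everything automatic. It sets $\cle=\cld\oplus\cld_{\h_q}\oplus\cld_{\h_p}$ (with $\cld$ infinite dimensional), lets $U\in\clb(\cle)$ be a unitary extension of the isometry $(0,\Dtq T_q^*h,\Dtp h)\mapsto(0,\Dtq h,\Dtp T_p^*h)$ furnished by Lemma~\ref{lem-defect}, and defines $V(D_{\h_{pq}}h)=(0,\Dtq h,\Dtp T_p^*h)$. With $P_p$ the projection of $\cle$ onto the summand $\cld_{\h_p}$ and $\iota_p,\iota_q$ the inclusions of $\cld_{\h_p}$ and $\cld\oplus\cld_{\h_q}$ into $\cle$, it then takes
\[
U_1=\begin{bmatrix}U&0\\0&I\end{bmatrix}\begin{bmatrix}P_p&\iota_q\\\iota_q^*&0\end{bmatrix},
\qquad
U_2=\begin{bmatrix}P_p^\perp&\iota_p\\\iota_p^*&0\end{bmatrix}\begin{bmatrix}U^*&0\\0&I\end{bmatrix}.
\]
These are products of unitaries, have vanishing feedthrough by inspection, and a short computation shows they carry the seed relations required by Lemma~\ref{dilating unitary}; the resulting symbols are $\Phi_p(\z)=(P_p+z_pP_p^\perp)U^*$ and $\Phi_q(\z)=U(P_p^\perp+z_pP_p)$, for which the identity $\Phi_p\Phi_q=\Phi_q\Phi_p=z_pI_{\cle}$ is immediate. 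To complete your argument you would need to supply this construction (or an equivalent one); note in particular that your $A_1,B_1,C_1$ are exactly $UP_p$, $U\iota_q$, $\iota_q^*$, so the ``coordination'' of the $p$- and $q$-colligations you ask for at the end is achieved by deriving both from the single unitary $U$.
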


\begin{proof}
Using the identity in (\ref{eq-defect}), we have
\[
\begin{split}
D_{\h_{pq}}^2 & = \Dtq^{2}+T_{p}\Dtp^2T_{p}^* \\
& = \Dtp^{2} + T_{q}\Dtq^2 T_{q}^*,
\end{split}
\]
and then, for each $h \in \clh$, we have
\[
\begin{split}
\|D_{\h_{pq}} h\|^2 & = \|\Dtq T_q^*h\|^2+ \|\Dtp h\|^2
\\
& = \|\Dtq h\|^2+ \|\Dtp T_p^* h\|^2.
\end{split}
\]
This implies that the map
\[
U : \{ \Dtq T_{q}^*h,\Dtp h: h\in\Hil\} \raro \{\Dtq h, \Dtp
T_{p}^*h: h\in\Hil\},
\]
defined by
\[
(\Dtq T_{q}^*h, \Dtp h)\mapsto (\Dtq h, \Dtp T_{p}^*h),
\]
is a well-defined isometry. By adding, if necessary, an infinite
dimensional Hilbert space $\cld$, we extend $U$ to a unitary map,
again denoted by $U$, from $\cld \oplus \cld_{\h_q} \oplus
\cld_{\h_p}$ onto itself. Then, setting
\[
\cle= \cld \oplus \cld_{\h_q} \oplus \cld_{\h_p},
\]
we have a unitary map $U \in \clb(\cle)$ such that
\[
U(0_{\cld},\Dtq T_{q}^*h, \Dtp h )= (0_{\cld},\Dtq h, \Dtp T_{p}^*h
),
\]
for all $h \in \clh$. The equality
\[
\|D_{\h_{pq}} h\|^2 = \|\Dtq h\|^2+ \|\Dtp T_p^* h\|^2,
\]
again implies that the map $V : \cld_{\h_{pq}} \raro \cle$ defined
by
\[
V(D_{\h_{pq}} h)= (0_{\cld}, D_{\h_q} h, D_{\h_p} T_p^* h),
\]
for $h \in \clh$, is an isometry. Now by Lemma \ref{product is
szego}, it follows that $\h_{pq} \in \mathbb{S}_{n-1}(\clh)$ is a
pure tuple. Consider the canonical isometric map $\Pi_{pq} : \clh
\raro H^2_{\cld_{\h_{pq}}}(\D^{n-1})$ for $\h_{pq}$ such that
(\ref{eq-Ri}) holds. Then as in (\ref{piv}), set
\[
\Pi_{V, pq} = (I_{H^2(\D^{n-1})}\otimes V)\circ \Pi_{pq} \in
\clb(\clh, H^2_{\cle}(\D^{n-1})).
\]
Therefore, the isometry $\Pi_{V, pq}$ dilates $\h_{pq}$ on $\clh$ to
$(M_{z_1} \otimes I_{\cle}, \ldots, M_{z_{n-1}} \otimes I_{\cle})$
on $H^2_{\cle}(\D^{n-1})$. We now prove that
\[
\Pi_{V, pq} T_p^* = M_{\Phi_p}^* \Pi_{V,pq},
\]
and
\[
\Pi_{V, pq} T_q^* = M_{\Phi_q}^* \Pi_{V, pq},
\]
for some inner polynomials $\Phi_p, \Phi_q \in
H^\infty_{\clb(\cle)}(\D^{n-1})$ in $z_p$ variable and of degree at
most one and
\[
\Phi_p(\z) \Phi_q(\z) = \Phi_q(\z) \Phi_p (\z) = z_p I_{\cle},
\]
for all $\z \in \D^{n-1}$. To this end, let $\iota_p: \cld_{\h_p}
\hookrightarrow \cle$ and $\iota_q : \cld \oplus \cld_{\h_q}
\hookrightarrow \cle$ be the inclusion maps defined by
\[
\iota_p(h_p)=(0,0,h_p),
\]
and
\[
\iota_q(h, h_q)=(h, h_q, 0),
\]
for all $h_p \in \cld_{\h_p}, h_q \in \cld_{\h_q}$ and $h\in \cld$.
Let $P_{p}$ be the orthogonal projection of $\cle$ onto
$\cld_{\h_p}$. Since
\[
\left[
\begin{array}{cc}
P_p & \iota_{q} \\
\iota_{q}^* & 0 \\
\end{array}
\right] : \cle \oplus (\cld\oplus \cld_{\h_q}) \raro \cle \oplus
(\cld\oplus \cld_{\h_q}),
\]
is a unitary, it follows that
\[
U_1 = \left[
\begin{array}{cc}
U & 0 \\
0 & I \end{array}
\right]
\left[
\begin{array}{cc}
P_p & \iota_{q} \\
\iota_{q}^* & 0
\end{array}
\right],
\]
is a unitary operator on $\cle \oplus (\cld\oplus \cld_{\h_q})$.
Clearly
\[
U_1 =\left[\begin{array}{cc}
UP_{p}& U\iota_q\\
\iota_q^* & 0
\end{array}\right].
\]
We now prove that the unitary $U_1$ satisfies the condition of Lemma
\ref{dilating unitary}. Let $h \in \clh$. Then
\begin{align*}
U_{1}(V D_{\h_{pq}} h,0_{\cld}, \Dtq T_{p}^*T_{q}^*h) & =
U_{1}(0_{\cld},\Dtq h, \Dtp T_{p}^*h, 0_{\cld}, \Dtq T_{p}^*T_{q}^*
h)
\\
&=(U(0_{\cld},\Dtq T_{p}^*T_{q}^*h, \Dtp T_{p}^*h),0_{\cld}, \Dtq h)
\\
&=(0_{\cld}, \Dtq T_p^* h, \Dtp T_p^{* 2} h, 0_{\cld},\Dtq h)
\\
&=(V  D_{\h_{pq}} T_p^* h,0_{\cld}, \Dtq h).
\end{align*}
Similarly, if we consider the unitary
\[
U_{2} =\left[ \begin{array}{cc}
P_p^\perp & \iota_{p} \\
\iota_{p}^* & 0
\end{array}  \right] \left[ \begin{array}{cc}
U^* & 0 \\0 & I
\end{array} \right],
\]
on $\cle\oplus \cld_{\h_p}$, then, again using the fact that
\[
U_{2} =\left[ \begin{array}{cc}
P_p^\perp U^* & \iota_{p} \\
\iota_{p}^*U^* & 0 \\
\end{array}  \right],
\]
it follows that
\[
U_2 (V D_{\h_{pq}} h, D_{\h_p} T_p^* T_q^* h)=(V D_{\h_{pq}} T_q^*
h, \Dtp h),
\]
for all $h \in \Hil$. Therefore by Lemma~\ref{dilating unitary}, we
have $\Pi_{V, pq} T_p^* = M_{\Phi_p}^* \Pi_{V,pq}$ and $\Pi_{V, pq}
T_q^* = M_{\Phi_q}^* \Pi_{V, pq}$, where
\[
\Phi_p(\z)= (P_{p}+ z_p P_p^{\perp})U^*,
\]
and
\[
\Phi_q(\z)= U(P_p^{\perp}+ z_pP_p),
\]
for all $\z\in\D^{n-1}$, are the transfer functions corresponding to
the unitaries $U_1^*$ and $U_2^*$ respectively.
Also we have
\[
\Phi_p (\z) \Phi_q(\z) = \Phi_q(\z) \Phi_p(\z) = z_p I_{\cle},
\]
for all $\z\in\D^{n-1}$. This completes the proof of the theorem.
\end{proof}

Some remarks on the above dilation result are now in order.

\textsf{Remark 1:} For the base case $n = 3$, a closely related
result to Theorem \ref{without finite} was obtained in \cite{DSS}
as follows: Let $(T_1, T_2, T_3) \in \T^3(\clh)$, and let $T_3 = T_1
T_2$ be a pure contraction. Then $(T_1, T_2, T_3)$ on $\clh$ dilates
to $(M_{\Phi_1}, M_{\Phi_2}, M_z)$ on $H^2_{\cle}(\D)$ where $\cle$
is a Hilbert space, $\Phi_1, \Phi_2 \in H^\infty_{\clb(\cle)}(\D)$
are inner polynomials of degree $ \leq 1$, and
\[
\Phi_1(z) \Phi_2(z) = \Phi_2(z) \Phi_1(z) = z I_{\cle},
\]
for all $z \in \D$. Here $(M_{\Phi_1}, M_{\Phi_2})$ is a Berger,
Coburn and Lebow pair of commuting isometries \cite{BCL}. Our
approach to Theorem \ref{without finite} is partially motivated by
the above result. More specifically, in Theorem \ref{without finite},
the isometric pair $(M_{\Phi_p}, M_{\Phi_q})$ is a one
variable (in $z_p$) Berger, Coburn and Lebow pair of commuting
isometries on $H^2_{\cle}(\D^{n-1})$ in the following sense:
\[
\Phi_p(\z) \Phi_q(\z) = \Phi_q(\z) \Phi_p(\z) = z_p I_{\cle},
\]
for all $\z \in \D^{n-1}$.

\textsf{Remark 2:} Let $\cle$ be a Hilbert space, and let
$(M_{\varphi_1}, M_{\varphi_2})$ be a Berger, Coburn and Lebow pair
of commuting isometries on $H^2_{\cle}(\D)$, that is, ${\varphi_1}$
and ${\varphi_2}$ be two inner functions in
$H^\infty_{\clb(\cle)}(\D)$ and
\[
\varphi_1(z) \varphi_2(z) = \varphi_2(z) \varphi_1 (z) = z I_{\cle},
\]
for all $z \in \D$. For $1 \leq p < q \leq n$, define $\Phi_p(\z) =
\varphi_1(z_p)$ and $\Phi_q(\z) = \varphi_2(z_p)$, $\z \in \D^{n-1}$.
Then $\Phi_p$ and $\Phi_q$ in $H^\infty_{\clb(\cle)}(\D^{n-1})$ are
inner polynomials in $z_p$ of degree at most one, and
\[
\Phi_p(\z) \Phi_q(\z) = \Phi_q(\z) \Phi_p (\z) = z_p I_{\cle},
\]
for all $\z \in \D^{n-1}$. Let $\clq$ be a joint invariant subspace
for
\[
(M_{z_1}^*, \dots, M_{z_{p-1}}^*, M_{\Phi_p}^*, M_{z_{p+1}}^*,\dots,
M_{z_{q-1}}^*, M_{\Phi_q}^*, M_{z_{q}}^*,\dots, M_{z_{n-1}}^*),
\]
and let
\[
T_i = \left \{\begin{array}{ll}
P_{\clq} M_{z_i}|_{\clq} \quad \quad \quad \text{if~} 1\le i< q, i\neq p,\\
P_{\clq} M_{\Phi_i}|_{\clq} \quad \quad \; \;\;\text{if~} i = p, q,\\
P_{\clq} M_{z_{i-1}}|_{\clq} \quad \quad \;\text{if~} q < i\le n.
\end{array}
\right.
\]
It is then easy to see that $(T_1, \ldots, T_n) \in
\T^n_{p,q}(\clq)$. Therefore
\[
(M_{z_1}, \dots, M_{z_{p-1}}, M_{\Phi_p}, M_{z_{p+1}},\dots,
M_{z_{q-1}}, M_{\Phi_q}, M_{z_{q}},\dots, M_{z_{n-1}}),
\]
is the model $n$-tuple of isometries for $n$-tuples of commuting
contractions in $\T^n_{p,q}(\clh)$.

\textsf{Remark 3:} The previous remark gives a list of non-trivial
examples of $n$-tuples of operators in $\T^n_{p,q}(\clh)$. Also
observe that if $T \in \T^n(\clh)$ is doubly commuting, that is,
$T_i^* T_j = T_j T_i^*$ for all $1 \leq i < j \leq n$, then
\[
\mathbb{S}_{n-1}^{-1} (\h_p, \h_p^*) = \prod_{i \neq p} (I_{\clh} -
T_i T_i^*),
\]
for all $p \in \{1, \ldots, n\}$. Hence, if $T \in \T^n(\clh)$ is a
doubly commuting pure tuple, then $T \in \T^n_{p,q}(\clh)$ for any
$1 \leq p < q \leq n$. We refer to \cite{VV} for examples of
$n$-tuples of operators in $\T^n_{p,q}(\clh)$.

We conclude by recording the von Neumann inequality for tuples in
$\T^n_{p,q}(\clh)$. The proof follows easily, as pointed out earlier
(see the introduction), from the dilation result, Theorem
\ref{without finite}.

\begin{thm}
If $T \in \T^n_{p,q}(\clh)$, then for all $p \in \mathbb{C}[z_1,
\ldots, z_n]$, the following holds:
\[
\|p(T)\|_{\clb(\clh)} \leq \sup_{\z \in \D^n} |p(\z)|.
\]
\end{thm}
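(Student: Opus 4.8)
The plan is to deduce the inequality directly from the isometric dilation produced in Theorem~\ref{without finite}, following the classical passage from dilations to the von Neumann inequality outlined in the introduction (this is precisely Statement~2 as a consequence of Statement~1). By Theorem~\ref{without finite} there exist a Hilbert space $\cle$, an isometry $\Pi : \Hil \raro H^2_{\cle}(\D^{n-1})$, and an $n$-tuple of commuting isometries
\[
V = (M_{z_1}, \dots, M_{z_{p-1}}, M_{\Phi_p}, M_{z_{p+1}},\dots, M_{z_{q-1}}, M_{\Phi_q}, M_{z_{q}},\dots, M_{z_{n-1}})
\]
on $H^2_{\cle}(\D^{n-1})$ such that $\Pi T_i^* = V_i^* \Pi$ for all $i = 1, \ldots, n$. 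First I would iterate this intertwining relation to obtain $\Pi T^{*\bm{k}} = V^{*\bm{k}} \Pi$ for every $\bm{k} \in \Z^n$, multiply on the left by $\Pi^*$ and use $\Pi^*\Pi = I_{\Hil}$ to get $T^{*\bm{k}} = \Pi^* V^{*\bm{k}} \Pi$, take adjoints to obtain $T^{\bm{k}} = \Pi^* V^{\bm{k}} \Pi$, and conclude by linearity that $p(T) = \Pi^* p(V) \Pi$ for every $p \in \Comp[z_1, \ldots, z_n]$. Since $\Pi$ is an isometry, this yields
\[
\|p(T)\|_{\clb(\Hil)} \leq \|p(V)\|.
\]

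Next I would pass from the isometric tuple $V$ to a commuting unitary tuple. As recalled in the introduction (following \cite{NF}), every $n$-tuple of commuting isometries extends to an $n$-tuple of commuting unitaries $U$ on a Hilbert space $\cll \supseteq H^2_{\cle}(\D^{n-1})$, with $H^2_{\cle}(\D^{n-1})$ invariant under each $U_i$ and $U_i|_{H^2_{\cle}(\D^{n-1})} = V_i$. Because $p$ involves only nonnegative powers of the variables, $p(V) = p(U)|_{H^2_{\cle}(\D^{n-1})}$, and hence $\|p(V)\| \leq \|p(U)\|$.

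Finally I would invoke the joint spectral theorem for the commuting unitary tuple $U$: there is a joint spectral measure supported on the joint spectrum $\sigma(U) \subseteq \mathbb{T}^n$, where $\mathbb{T} = \partial \D$, so that $\|p(U)\| = \sup_{\z \in \sigma(U)} |p(\z)| \leq \sup_{\z \in \mathbb{T}^n} |p(\z)|$. By the maximum modulus principle for the analytic polynomial $p$ on the polydisc, $\sup_{\z \in \mathbb{T}^n} |p(\z)| = \sup_{\z \in \overline{\D^n}} |p(\z)| = \sup_{\z \in \D^n} |p(\z)|$. Chaining the three inequalities gives the asserted bound.

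The argument is short precisely because all of the substance already resides in Theorem~\ref{without finite}; the step to watch is the upgrade of the one-sided intertwining $\Pi T_i^* = V_i^* \Pi$ to the full functional-calculus identity $p(T) = \Pi^* p(V) \Pi$, where it is essential both that $\Pi$ is an isometry and that one works with the pure powers $T^{\bm{k}}$, $V^{\bm{k}}$. Apart from this routine bookkeeping and the torus-to-polydisc reduction via the maximum modulus principle, there is no genuine obstacle: the statement is an immediate corollary of the explicit dilation constructed above.
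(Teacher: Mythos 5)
Your argument is correct and is exactly the route the paper intends: the paper's "proof" of this theorem is the single remark that it "follows easily, as pointed out earlier (see the introduction), from the dilation result, Theorem \ref{without finite}," and the introduction's sketch is precisely your chain (dilation to commuting isometries, extension to commuting unitaries, joint spectral theorem on $\mathbb{T}^n$, maximum modulus). Your write-up simply fills in the standard details the authors leave implicit.
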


Note that the above von Neumann inequality generalizes the one
considered by Grinshpan, Kaliuzhnyi-Verbovetskyi, Vinnikov and
Woerdeman \cite{VV} to a large class of tuples in $\T^n(\clh)$ (see
Subsection \ref{sub-Tnpq}).

\vspace{0.1in} \noindent\textbf{Acknowledgement:} We are very
grateful to the referee for a careful reading of the manuscript, for
raising some interesting points, and for valuable comments and
corrections. The research of the second named author is supported by
DST-INSPIRE Faculty Fellowship No. DST/INSPIRE/04/2015/001094. The
third author's research work is supported by DST-INSPIRE Faculty
Fellowship No. DST/INSPIRE/04/2014/002624. The research of the
fourth named author is supported in part by the Mathematical
Research Impact Centric Support (MATRICS) grant, File No :
MTR/2017/000522, by the Science and Engineering Research Board
(SERB), Department of Science \& Technology (DST), Government of
India, and NBHM (National Board of Higher Mathematics, India)
Research Grant NBHM/R.P.64/2014.

\end{document}